\newcommand{\bol}{\boldsymbol}
\newcommand{\ney}{\boldsymbol{y}}                          
\newcommand{\nex}{\boldsymbol{x}}
\newcommand{\ner}{\bol{r}}
\newcommand{\de}{\,\mathrm{d}}                               
\newcommand{\e}{\operatorname{e}}
\newcommand{\andtext}{\quad\mbox{and}\quad}
\newcommand{\p}{\partial}
\newcommand{\real}{\mathrm{Re}\,}    
\newcommand{\imag}{\mathrm{Im}\,}
\newcommand{\lf}{\left}
\newcommand{\rg}{\right}
\newcommand{\R}{\mathbb{R}}       
\newcommand{\C}{\mathbb{C}}
\DeclareMathOperator*{\argminA}{arg\,min} 
\newtheorem{theorem}{Theorem}[section]
\newtheorem{lemma}[theorem]{Lemma}
\newtheorem{remark}[theorem]{Remark}
\title{On the regularization of Cauchy-type integral operators via \\the density interpolation method and applications
 }
\author[1,2]{Vicente G\'omez\thanks{vgh225@nyu.edu}}
\author[1]{Carlos P\'erez-Arancibia\thanks{cperez@mat.uc.cl}\thanks{This work was supported by FONDECYT under Grant 11181032.}}
\affil[1]{\small{Institute for Mathematical and Computational Engineering, Pontificia Universidad Cat\'olica de Chile}}
\affil[2]{\small{Courant Institute of Mathematical Sciences, New York University}}
\date{\today}
\begin{document}
\maketitle

\begin{abstract}

This paper presents a regularization technique for the high order efficient numerical evaluation of nearly singular,  principal-value, and finite-part  Cauchy-type integral operators. By relying on the Cauchy formula, the Cauchy-Goursat theorem, and on-curve Taylor interpolations of the input density, the proposed methodology allows to recast the Cauchy and associated integral operators as smooth contour integrals. As such, they can be accurately evaluated everywhere in the complex plane---including at problematic points near and on the contour---by means of elementary quadrature rules. Applications of the technique to the evaluation of the Laplace layer potentials and related integral operators, as well as to the computation conformal mappings, are examined in detail. The former application, in particular, amounts to a significant improvement over the recently introduced harmonic density interpolation method. Spectrally accurate discretization approaches for smooth and piecewise smooth contours are presented. A variety of numerical examples, including the solution of weakly singular and hypersingular Laplace boundary integral equations, and the evaluation of challenging conformal mappings, demonstrate the effectiveness and accuracy of the density interpolation method in this context. 
\end{abstract}

\section{Introduction} This contribution deals with the numerical evaluation of the  Cauchy integral operator
\begin{equation}\label{eq:int_op}
 (\mathcal C\varphi)(z):=\frac{1}{2\pi i}\int_\Gamma\frac{\varphi(\zeta)}{\zeta-z}\de \zeta,\quad z\in\C\setminus\Gamma,
\end{equation}
and related complex contour integrals, where $\Gamma$ is a closed curve enclosing a simply connected domain $\Omega\subset\C$ and $\varphi\in C(\Gamma)$, with the contour integral performed in the counterclockwise direction.  This class of integral expressions play a fundamental role in the analytical and numerical solution of numerous  problems in applied mathematics that relies on (complex) contour integral representations of smooth (analytic) functions, including, for instance,  conformal mapping~\cite{kythe2019handbook,henriciVol3,papamichael2010numerical} and boundary value problems in electrostatics, elastostatics, and potential and Stokes flows~\cite{jaswon1977integral,mikhlin1964integral,muskhelishvili2008} (see Section.~\ref{sec:applications}).

As is well known, the Cauchy integral operator~\eqref{eq:int_op} defines a complex analytic function $\mathcal C\varphi$  in both~$\Omega$ and in its unbounded open complement $\Omega'=\C\setminus\overline\Omega$~\cite{kress2012linear,henriciVol3}. Moreover, if $f$ is  an analytic function in $\Omega$, it holds that $\mathcal Cf=f$ in $\Omega$ and  $\mathcal Cf=0$ in $\Omega'$, or more conventionally,  
\begin{equation}\label{eq:CF}
 \frac{1}{2\pi i}\int_\Gamma \frac{f(\zeta)}{\zeta-z}\de \zeta=\begin{cases}f(z),&z\in\Omega,\\0,&z\in\Omega',\end{cases}
\end{equation}
which summarizes both the Cauchy integral formula and the Cauchy-Goursat theorem~\cite{henriciVol1}. Assuming further that $\Gamma$ is of class $C^2$ and that $\varphi\in C^{0,\alpha}(\Gamma)$ (i.e., that $\varphi$ is a H\"older continuous complex-valued function with exponent $0<\alpha\leq 1$), we have---by the Sokhotski-Plemelj theorem~\cite{kress2012linear,henriciVol3}---that the analytic function $\mathcal C\varphi$ in $\C\setminus\Gamma$ can be uniformly H\"older continuously extended from $\Omega$ to $\overline\Omega$ and from $\Omega'$ to  $\overline{\Omega'}$ with limiting values 
\begin{equation}\label{eq:sing_int}
\lim_{h\to 0}(\mathcal C\varphi)(z\pm h\nu(z))=\frac{1}{2}(H\varphi)(z) \mp \frac{1}{2}\varphi(z),\quad z\in\Gamma,
\end{equation}
where $\nu$ denotes the exterior unit normal to the contour $\Gamma$ and where $H\varphi$ is the principal-value integral
\begin{equation}\label{eq:hilbert}
(H\varphi)(z) :=\frac{1}{\pi i}\,{\rm p.v.}\!\!\int_\Gamma\frac{\varphi(\zeta)}{\zeta-z}\de \zeta,\quad z\in\Gamma.
\end{equation}
This paper introduces a unified approach for regularizing the contour integral expressions in~\eqref{eq:int_op} and~\eqref{eq:hilbert}, together with their corresponding derivatives, that enables their accurate numerical evaluation through direct use of elementary quadrature rules.

\begin{figure}[ht]
\centering	
\subfloat[Without regularization.]{\includegraphics[height=0.45\textwidth]{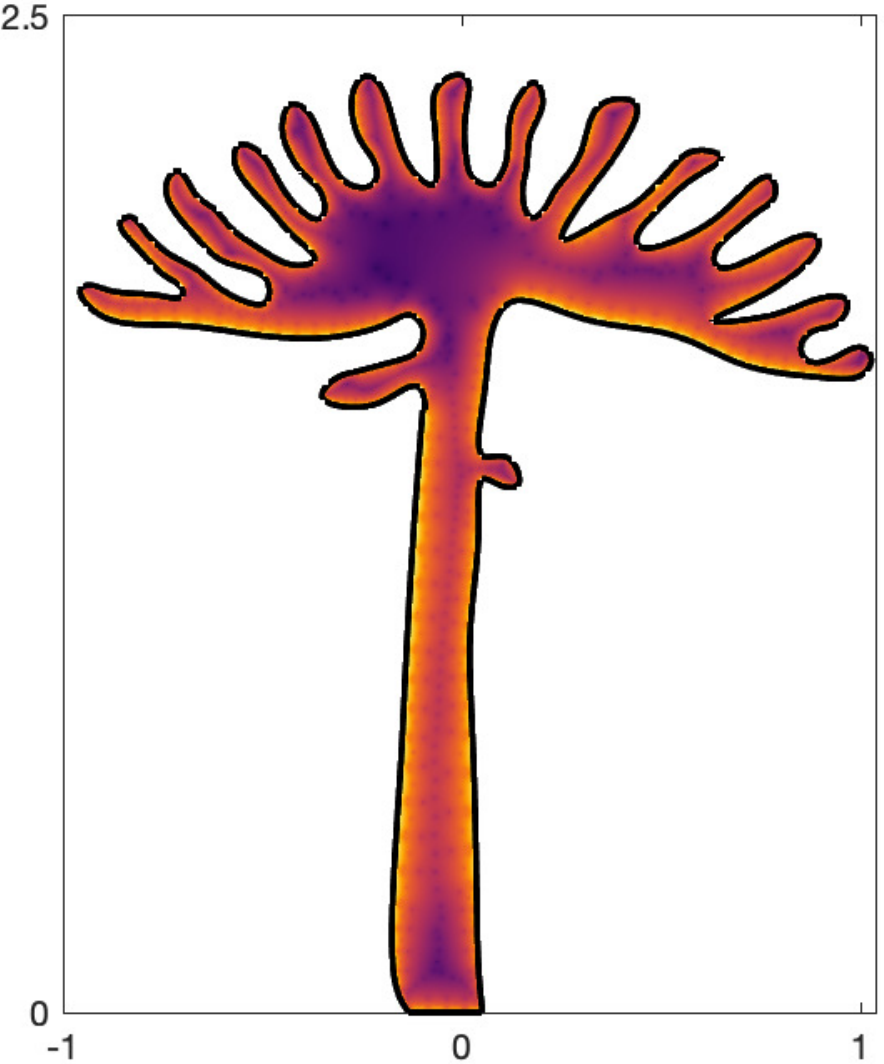}\label{fig_ANR}}\qquad
\subfloat[With regularization.]{\includegraphics[height=0.45\textwidth]{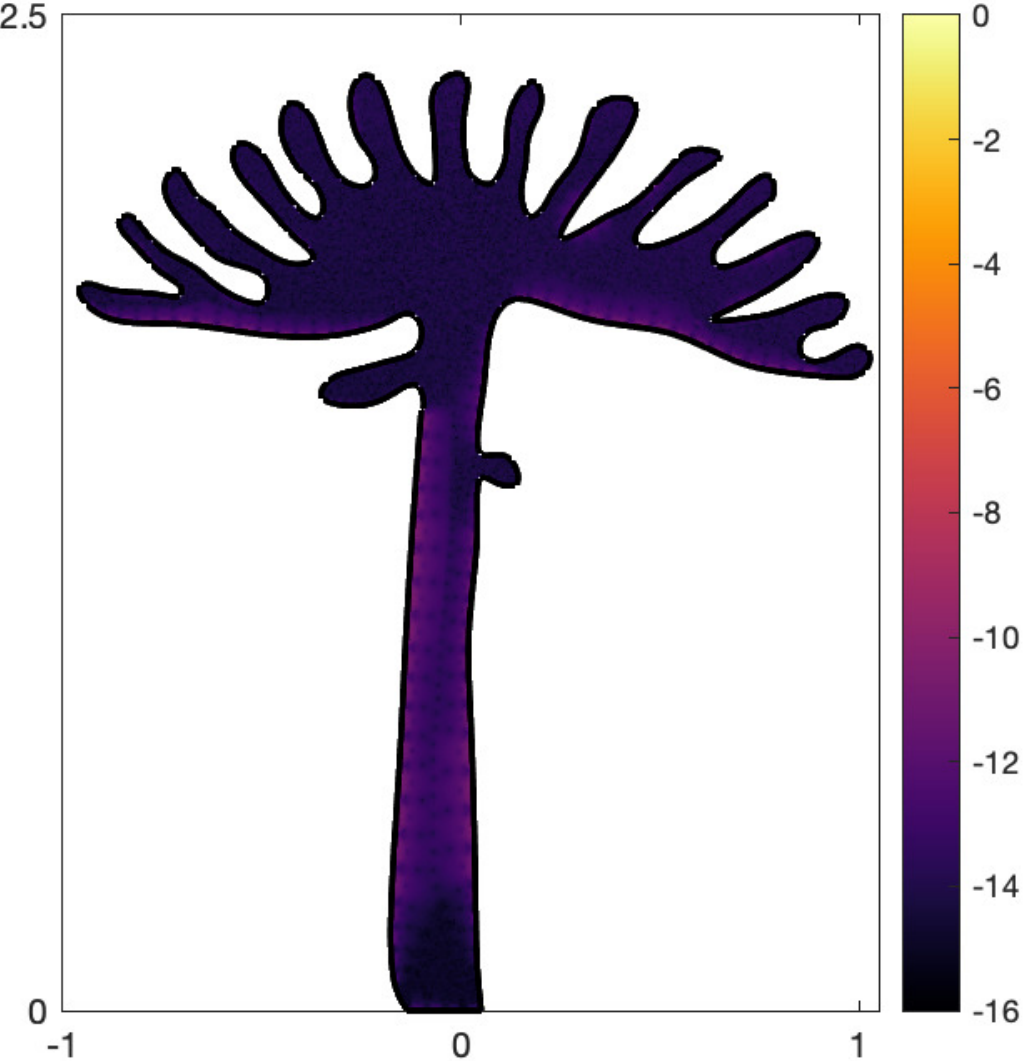}\label{fig_AR}}
 \caption{Logarithm in base ten of the absolute error in the numerical evaluation of the Cauchy operator $(\mathcal C\varphi)(z)$, defined in~\eqref{eq:int_op}, within an Araucaria tree-shaped contour parametrized by means of cubic splines. (a) Direct evaluation produced without using regularization, and  (b) using the proposed density interpolation method of order three, everywhere inside the curve. The input density function $\varphi(z)=\sin z$ and the same Chebyshev quadrature nodes were used in both examples.}\label{fig:AraucariaExample}
\end{figure}  

Since the pole singularity in the Cauchy integral~\eqref{eq:int_op} does not lie on~$\Gamma$, the contour integrand is as smooth as the input function~$\varphi$ and the contour~$\Gamma$. The numerical evaluation of~\eqref{eq:int_op} at any $z\in\C\setminus\Gamma$ could be accomplished, in principle, employing elementary quadrature rules. The trapezoidal rule, for example, yields exponential convergence as the number of quadrature nodes increases whenever both~$\varphi$ and~$\Gamma$ are analytic~\cite{lyness1967numerical,fornaro1973numerical}. Issues arise, however, when~$z$ approaches the contour, resulting in a nearly singular integrand (which is the term coined to refer to functions that although smooth develop large derivatives at certain points due to the presence of nearby singularities) at the points on $\Gamma$ that are the closest to $z$. At the numerical level, this phenomenon translates into a severe deterioration in the accuracy of the approximate integral, as the given set of quadrature nodes becomes incapable of properly resolving the localized features of the integrand taking place at the location of the nearly singular points, which could lie anywhere on $\Gamma$ depending on the location of~$z$. This problem is amplified when derivatives of the Cauchy operator are considered.  In order to illustrate the severity of the accuracy deterioration near the contour, we present Figure~\ref{fig:AraucariaExample} which displays the absolute error in the direct evaluation evaluation the Cauchy operator.

A number of approaches have been developed to tackle this problem. Lyness~\&~Delves~\cite{lyness1967numerical} developed a method that combines the Cauchy integral formula with Taylor series expansions at suitably located points inside the region enclosed by the contour. Instead of directly evaluating the integral~\eqref{eq:int_op}, this method produces the Taylor series expansion  $(\mathcal C\varphi)(z)\approx \sum_{j=0}^Nc_n(z-z_0)^n$ at a point $z_0\in\Omega$ sufficiently far from~$\Gamma$, by recasting the coefficients, via the Cauchy formula, as $c_n=n!(2\pi i)^{-1}\int_{\Gamma}\varphi(\zeta)/(\zeta-z_0)^{n+1}\de\zeta$. Making sure $z$ lies in inside the region of convergence of the series, one obtains an approximation of $(\mathcal C\varphi)(z)$. A similar but more direct and efficient approach was later introduced by Ioakimidis, Papadakis~\&~Perdios~\cite{ioakimidis1991numerical} which makes use of the Cauchy-Goursat theorem, instead of the Cauchy formula, and the trapezoidal rule. Some of the ideas put forth in these works have resurged in recent years in the form of  highly sophisticated and accurate algorithms for close surface evaluation of two-dimensional Laplace, Helmholtz and Stokes layer potentials operators~\cite{helsing2008evaluation,Barnett:2014tq,Barnett:2015kg} which suffer from the same nearly singular integrand problem. 

A different set of techniques have been developed for the numerical evaluation of challenging principal-value and finite-part contour integrals---such as~\eqref{eq:hilbert} and its tangential derivative considered below in Section~\ref{sec:DL_formulation}. There is ample literature on this particular subject (e.g., \cite{monegato1994numerical,paget1972algorithm,theocaris1979method,hunter1972some,chawla1974modified,chawla1974numerical,elliott1979gauss,Kolm:2001bt,monegato1982numerical}) that we do not attempt to review here. We do, however, mention a few important contributions concerning the evaluation of finite-part (hypersingular) contour integrals arising in the context of boundary integral equations, which include the quadrature by expansion method~\cite{klockner2013quadrature}, which bears similarities to~\cite{ioakimidis1991numerical} and~\cite{Barnett:2014tq}, and the spectrally accurate techniques based on trigonometric interpolation~\cite{Kress:1995} and trigonometric differentiation~\cite{kress2014collocation}.

This paper introduces a unified approach to the regularization of nearly-singular, principal-value, finite-part complex contour integrals. The proposed methodology relies on the ideas of density interpolation methods for boundary integral operators~\cite{perez2018plane,perez2019harmonic,perez2019planewave,perez2020IEEE}. It combines a Taylor-like interpolation of the contour density $\varphi$ at the nearly-singular (resp. singular) point on the contour $\Gamma$, with the Cauchy integral formula~\eqref{eq:CF} (resp. Sokhotski-Plemelj formula~\eqref{eq:sing_int}) to recast nearly-singular (resp. principal-value and finite-part) contour integrals in terms of integrands whose smoothness is controlled by the density interpolation order (see Section~\ref{sec:nearly_singular}).  The resulting contour integrals can thus be directly evaluated using elementary quadrature rules. Unlike the methods put forth in~\cite{lyness1967numerical,ioakimidis1991numerical}, which rely on~$\varphi$ being the restriction to $\Gamma$ of an analytic function in $\overline\Omega$, the density interpolation technique only requires smoothness of $\varphi$ at the interpolation points on the contour~$\Gamma$. On the other hand, the density interpolation approach bears a number of advantages in comparison with existing methods as it requires fewer number of parameters to be tuned in order to achieve its optimal performance.  It is worth mentioning that, as the recently introduced general-purpose density interpolation method~\cite{faria2020general}, the proposed technique is fully compatible with standard fast algorithms such as  $\mathcal H$-matrices~\cite{hackbusch2015hierarchical} and the Fast Multipole Method~\cite{greengard1987fast},  for the efficient evaluation of the integral operators hereby considered.

Two relevant applications of the proposed technique are discussed in detail in Section~\ref{sec:applications}, which concern (a) the regularization of the Laplace layer potentials and the associated boundary integral operators of Calder\'on calculus, and (b) the numerical evaluation of conformal mappings. Regarding the Laplace integral operators, this new density interpolation method amounts to a significant improvement over the related 2D~harmonic density interpolation method (HDI)~\cite{perez2019harmonic}. In particular, it allows for stand-alone kernel regularizations meaning that, unlike the HDI, where regularizations are effected by evaluation of pairs of integral operators, the present approach requires evaluation of just one integral operator to achieve the same integrand regularity degree. Regarding conformal mapping, on the other hand, it exploits the relation between conformal mappings and Laplace Dirichlet boundary value problems~\cite{symm1966integral,symm1967numerical} to derive Fredholm second-kind integral equations for the construction of both interior and exterior conformal mappings based on a Cauchy-operator integral representation of the double-layer potential. Upon regularization, the resulting conformal mappings can be accurately evaluated near and at the contour. 

High-order numerical methods for the practical implementation of the contour-integral regularization strategy are presented in Section~\ref{sec:numerics} for both smooth and piecewise smooth curves based on the trapezoidal and Fej\'er quadrature rules, respectively.  An efficient high-order FFT-based algorithm is developed for the construction of the Cauchy-operator density interpolant in Section~\ref{sec:det_coef}. Section~\ref{sec:examples}, finally, presents a variety of numerical examples designed to validate and demonstrate the effectiveness, applicability, and accuracy, of the proposed methodology.

\section{Regularization via density interpolation}\label{sec:nearly_singular}

We start off this section  by  addressing the problem of the regularization of the Cauchy integral operator~\eqref{eq:int_op} and its derivatives at points $z\in\C\setminus\Gamma$ near the contour~$\Gamma$. Assuming~$\Gamma$ to be a Jordan curve of class~$C^1$ and  provided the evaluation point $z$ in the Cauchy operator~\eqref{eq:int_op} lies close enough to~$\Gamma$, there is a unique~$z_0\in\Gamma$ such that 
\begin{equation}\label{eq:ns_point}
z_0= \argminA_{\zeta\in\Gamma}|z-\zeta|.\end{equation}
Our goal is then to regularize the integrand $g(\zeta)=\varphi(\zeta)/(\zeta-z)$ in~\eqref{eq:int_op} at and around the nearly-singular point $z_0\in\Gamma$, at which $g$ itself and also its derivatives reach large values. 

 In order to accomplish that we introduce the following Taylor-like complex polynomial
\begin{equation}\label{eq:truc_taylor}
P_N(z, z_0) :=  \sum_{j=0}^N \frac{c_j(z_0)}{j!}(z- z_0)^j,\quad z\in\C,\  z_0\in\Gamma,
\end{equation} where the set of coefficients $\{c_j(z_0)\}_{j=0}^N$ are to be determined by imposing appropriate interpolation conditions at $z_0$. Since $P_N(\cdot,z_0)$ is an entire function, the Cauchy integral formula~\eqref{eq:CF} together with the Cauchy--Goursat theorem~\cite{henriciVol1} yield the identity
\begin{equation}\label{eq:correction_formula}
\frac{1}{2\pi i}\int_{\Gamma}\frac{P_N(\zeta,z_0)}{\zeta-z}\de \zeta=\begin{cases}P_N(z, z_0),& z\in\Omega,\\
0,&z\in\Omega'.\end{cases}
\end{equation} 
Subtracting~\eqref{eq:correction_formula} from~\eqref{eq:int_op} we obtain that the Cauchy operator~\eqref{eq:int_op} can be recast as
\begin{equation}\label{eq:reg_formula}
(\mathcal C\varphi)(z)=\frac{1}{2\pi i}\int_{\Gamma}\frac{\varphi(\zeta)-P_N(\zeta,z_0)}{\zeta-z}\de \zeta  + \mathbf{ 1}_{\Omega}(z)P_N(z,z_0),\quad z\in\Omega\setminus\Gamma,\ z_0\in\Gamma,
\end{equation}
where $\mathbf{1}_{\Omega}$ denotes the indicator function of the domain $\Omega$. The main idea of the proposed methodology lies then  in constructing $P_N(\cdot,z_0)$, or equivalently, finding coefficients $\{c_j(z_0)\}_{j=0}^{N}$, so that the numerator of the integrand in~\eqref{eq:reg_formula} vanishes to high order precisely at $z_0$. In detail, we want 
\begin{equation}\label{eq:proto_interp_cond}
|\varphi(\zeta)-P_N(\zeta,z_0)|= o\lf(|\zeta-z_0|^{N}\rg)\quad\mbox{as}\quad \Gamma\ni\zeta\to z_0\in\Gamma,
\end{equation}  so that the regularized integrand in~\eqref{eq:reg_formula} satisfies
$$
\lf|\frac{\varphi(\zeta)-P_N(\zeta,z_0)}{\zeta-z}\rg|=  o\lf(\frac{|\zeta-z_0|^{{N}}}{\delta}\rg)\quad\mbox{as}\quad \Gamma\ni\zeta\to z_0\in\Gamma,
$$
where $\delta = |z-z_0|$. This implies that if both the curve $\Gamma$ and the density $\varphi$ are sufficiently smooth at $z_0$, not only the integrand in~\eqref{eq:proto_interp_cond} vanishes at $z_0$ but also all its derivatives up to oder $N\geq 1$, regardless of the distance $\delta$ from $z$ to the $\Gamma$. 

Assuming enough local regularity of the curve and the density at $z_0\in\Gamma$, the desired property~\eqref{eq:proto_interp_cond} can be recast in a more amenable form. Indeed,
 let $\gamma:[0,2\pi)\to\Gamma$ be a counterclockwise $2\pi$-periodic parametrization of $\Gamma$, and let  $\phi(\tau):=\varphi(\gamma(\tau))$ and $p_N(\tau,t_0):=P_N(\gamma(\tau),\gamma(t_0))$ for all $\tau\in [0,2\pi)$, with $z_0=\gamma(t_0)$ ($t_0\in [0,2\pi)$). 
Then, it can be directly shown---via Taylor series expansions---that~\eqref{eq:proto_interp_cond} is attained provided the interpolation conditions 
\begin{equation}\label{eq:inter_cond}
\lim_{\tau\to t_0}\frac{\p^m}{\p \tau^m}\lf[\phi(\tau)- p_N(\tau,t_0)\rg] = 0,\quad m=0,\ldots,N,
\end{equation}
are satisfied, which require both $\gamma$ and $\phi$ to be $N$~times differentiable at $\tau=t_0$.  

As it turns out, the interpolation conditions~\eqref{eq:inter_cond} suffice to uniquely determine the coefficients  $\{c_j(z_0)\}_{j=0}^{N}$. Indeed, differentiating $p_N(\cdot,t_0)$ at $t_0$, we obtain---by repeated use of the chain rule---the Fa\'a di Bruno formula
\begin{equation}\label{eq:chain_rule}
\left.\frac{\partial^{m}}{\partial \tau^{m}} p_N(\tau, t_0)\right|_{\tau=t_0}=\sum_{j=1}^{m} c_{j}(z_0) \mathbb{B}_{m, j}\left(\gamma^{\prime}(t_0), \gamma^{\prime \prime}(t_0), \ldots, \gamma^{(m-j+1)}(t_0)\right),\quad m=1,\ldots,N,
\end{equation}
  where $\mathbb{B}_{m, j}$ are the incomplete Bell polynomials~\cite{aldrovandi2001special}. From the interpolation condition~\eqref{eq:inter_cond} for $m=0$ we readily get that $c_0(z_0) = \phi(t_0)(=\varphi(z_0))$. Making use of~\eqref{eq:chain_rule}, on the other hand, we get from~\eqref{eq:inter_cond} that the~$N$~remaining coefficients are given by the solution of the linear system 
\begin{equation}\label{eq:linear_system}
A(t_0) \mathbf{c}(z_0)=\mathbf{b}(t_0),
\end{equation}
where $\mathbf{c}(z_0)=[c_1(z_0),\ldots,c_N(z_0)]^T\in\C^{N}$, $\mathbf{b}(t_0) =[\phi^{(1)}(t_0),\ldots, \phi^{(N)}(t_0)]^T\in\C^{N}$, and where $A(t_0)\in\C^{(N+1)\times (N+1)}$ is the lower triangular matrix with entries
$$a_{m, j}(t):=\begin{cases}0, & 1\leq m<j\leq N, \\ \mathbb{B}_{m, j}\left(\gamma^{\prime}(t), \ldots, \gamma^{(m-j+1)}(t)\right), & 1\leq j\leq m \leq N.\end{cases}$$ 
The existence and uniqueness of the coefficients $\{c_j(z_0)\}_{j=0}^{N}$ thus follows from the invertibility of~$A(z_0)$, which is a direct consequence of the fact that~$A(z_0)$ is a lower triangular matrix and its diagonal entries satisfy $a_{m,m}(t_0) =\mathbb B_{m,m}(\gamma'(t_0))=(\gamma'(t_0))^m\neq0$ ($\gamma$ is a regular parametrization of $\Gamma$).  It is worth mentioning here that in Section~\ref{sec:det_coef} we show that the coefficients $\{c_j(z_0)\}_{j=0}^{N}$ can be determined by an efficient recursive procedure so that the (somewhat laborious) construction of the matrix $A(t_0)$ can be completely avoided in practice.

The proposed approach can as well be utilized to the regularize the derivatives of the Cauchy operator~\eqref{eq:int_op}, which are given by:
\begin{equation}\label{eq:cauchy_der}
(\mathcal C\varphi)^{(n)}(z)=\frac{n!}{2\pi i}\int_\Gamma\frac{\varphi(\zeta)}{(\zeta-z)^{n+1}}\de \zeta,\quad z\in\Omega\setminus\Gamma,\ n\geq 1.
\end{equation}
In fact, using the Cauchy integral representation of the derivatives of the (analytic) density interpolant~\eqref{eq:truc_taylor}, we obtain  
\begin{equation}\label{eq:correction_formula_v2}
\frac{n!}{2\pi i}\int_{\Gamma}\frac{P_N(\zeta,z_0)}{(\zeta-z)^{n+1}}\de \zeta =\begin{cases}\displaystyle\frac{\p^n}{\p z^n}P_N(z, z_0) =\sum_{j=0}^{N-n} \frac{c_{j+n}(z_0)}{j!}(z- z_0)^j,& z\in\Omega,\\
0,&z\in\Omega',
\end{cases}
\end{equation}
for $N> n$.
Subtracting~\eqref{eq:correction_formula_v2} from~\eqref{eq:cauchy_der} we arrive at the regularized expression 
\begin{equation}\label{eq:reg_formula_der}
(\mathcal C\varphi)^{(n)}(z)=\frac{n!}{2\pi i}\int_{\Gamma}\frac{\varphi(\zeta)-P_N(\zeta,z_0)}{(\zeta-z)^{n+1}}\de \zeta  + \mathbf{ 1}_{\Omega}(z)\frac{\p^n}{\p z^n}P_N(z,z_0),\quad z\in\Omega\setminus\Gamma,
\end{equation}
for the $n$th-order derivative of the Cauchy operator, with the integrand satisfying 
\begin{equation}
\lf|\frac{\varphi(\zeta)-P_N(\zeta,z_0)}{(\zeta-z)^{n+1}}\rg|=  o\lf(\frac{|\zeta-z_0|^{N}}{\delta^{n+1}}\rg)\quad\mbox{as}\quad \Gamma\ni\zeta\to z_0\in\Gamma. \label{eq:to_please_the_reviewer}
\end{equation}

Finally, we apply the density interpolation technique to regularize  the Cauchy principal value integral~\eqref{eq:hilbert} for which we assume here that both $\Gamma$ and $\varphi$ are of class $C^N$, $N\geq 2$.
 Applying the Sokhotski-Plemelj formula~\eqref{eq:sing_int} to the analytic density interpolant~\eqref{eq:truc_taylor}, we obtain 
\begin{equation}\label{eq:cauchy_on_b}
P_N(z,z_0) =\frac{1}{\pi i}\,{\rm p.v.}\!\!\int_{\Gamma}\frac{P_N(\zeta,z_0)}{\zeta-z}\de \zeta
,\quad z\in\Gamma,
\end{equation}
where the limit was taken from inside of $\Omega$. Setting $z_0=z$ in the formula above and subtracting it from~\eqref{eq:hilbert} we arrive at 
\begin{equation}\label{eq:reg_hilbert}
(H\varphi)(z)=\frac{1}{\pi i}{\rm p.v.}\!\!\int_{\Gamma}\frac{\varphi(\zeta)-P_N(\zeta,z)}{\zeta-z}\de \zeta  + \varphi(z),\quad z
\in\Gamma,
\end{equation} where we have used that $P_N(z,z) = \varphi(z)$ by construction. The integrand in~\eqref{eq:reg_hilbert} is  smooth (in parametric form it is $(N-1)$ times differentiable at $\tau=t$, where $\zeta=\gamma(\tau)$ and $z=\gamma(t)$) and it satisfies 
$$
\lf|\frac{\varphi(\zeta)-P_N(\zeta,z)}{\zeta-z}\rg|=  o\lf(|\zeta-z|^{N-1}\rg)\quad\mbox{as}\quad  \Gamma\ni\zeta\to z_0\in\Gamma.
$$
It is easy to see that just the zeroth order interpolant (i.e., $\Phi_0(z,z_0) = \varphi(z_0)$) suffices to  produce a regular integrand in~\eqref{eq:reg_hilbert}. However, the real utility of the density interpolation method lies in that more singular (e.g., finite part) integrals associated with tangential derivatives of the principal value integral~\eqref{eq:hilbert} can be regularized by means of the proposed methodology, as is the case of the Laplace hypersingular operator addressed in Section~\ref{sec:DL_formulation}.

\begin{remark} Note that if $\varphi$ is the restriction to $\Gamma$ of a function $f$ that is (complex) analytic in a region containing $\Gamma$, we have that the expansion coefficients~\eqref{eq:truc_taylor} are given by  $c_j(z_0)=f^{(j)}(z_0)$, $j=0,\ldots,N$,  and hence $P_N$ is just the $N$th-order Taylor series expansion of $f$ at  $z_0\in\Gamma$. This is so because the analyticity   implies that the tangential derivatives of $\varphi$ along the curve $\Gamma$ coincide with the complex derivatives of $f$ in this case. This is clearly not necessarily true for less regular density functions~$\varphi$.
\end{remark}
\section{Applications}\label{sec:applications}
This section describes a variety of applications of the proposed regularization technique.

\subsection{Laplace boundary integral equations}\label{sec:DL_formulation}
The first application that we present concerns the solution of the Laplace equation by means of boundary integral equation methods. 

We start off by defining the double- and single-layer potentials in two spatial dimensions, which are given by 
\begin{subequations}\begin{eqnarray}
(\mathcal D\varphi)(\ner) &:=&\frac{1}{2\pi}\int_{\Gamma}\frac{\nu(\ney)\cdot(\ner-\ney)}{|\ner-\ney|^2}\varphi(\ney)\de s(\ney)\andtext\label{eq:DL_pot}\\
(\mathcal S\varphi)(\ner) &:=& -\frac{1}{2\pi}\int_{\Gamma} \log|\ner-\ney|\varphi(\ney)\de s(\ney),\quad \ner\in \R^2\setminus \Gamma,\label{eq:SL_pot}
\end{eqnarray}\label{eq:potentials}\end{subequations}
respectively, where  $\Omega\subset\R^2$ denotes a bounded and simply connected domain with boundary $\Gamma$ of class smooth $C^2$, and where the density function $\varphi:\Gamma\to\R$ is real-valued and continuous. These potentials define $C^2(\R^2\setminus\Gamma)$ functions that satisfy the Laplace equation in $\R^2\setminus\Gamma$. 

\begin{remark} Before continuing we warn the reader that in what follows of this section, the same symbol $\Omega$ is used to refer to both the (real) domain $\Omega\subset\R^2$ and its corresponding complex counterpart. Likewise, the same holds for the boundary $\Gamma$, its unit normal $\nu$, the density function $\varphi$, and the curve parametrization $\gamma$. We also mention that, in what follows we utilize the symbol $\ner$ to refer to points belonging to $\R^2\setminus\Gamma$ and the symbols~$\nex$ and~$\ney$ to denote points lying on the curve $\Gamma$.\end{remark}

As is well-known~\cite{kress2012linear}, the limit values of the double-layer potential~\eqref{eq:DL_pot} on $\Gamma$, give rise to the double-layer operator $K:C(\Gamma)\to C^{0,\alpha}(\Gamma)$:
\begin{equation}\label{eq:DL_op}\begin{split}
(K\varphi)(\nex) :=&\frac{1}{2\pi}\int_{\Gamma}\frac{\nu(\ney)\cdot(\nex-\ney)}{|\nex-\ney|^2}\varphi(\ney)\de s(\ney),\quad \nex\in\Gamma,\end{split}
\end{equation}
which is given in terms of a smooth (at least continuous) integral kernel~\cite{perez2019harmonic}. In view of the smoothness of the integrand in~\eqref{eq:DL_op}, numerical evaluation of the double-layer operator~\eqref{eq:DL_op} requires neither kernel regularization nor specialized quadrature rules. In contrast, the normal derivative of the double-layer potential on $\Gamma$ leads to  the so-called hypersingular operator $T:C^{1,\alpha}(\Gamma)\to C^{0,\alpha}(\Gamma)$:
\begin{equation}\label{eq:hypersingular}\begin{split}
(T\varphi)(\nex) =& \lim_{\epsilon\to 0}\nu(\nex)\cdot\nabla (\mathcal D\varphi)(\nex+\epsilon\nu(\nex))\\
=&\frac{1}{2\pi}\int_{\Gamma}\lf\{\frac{\nu(\ney)\cdot\nu(\nex)}{|\nex-\ney|^2}-2\frac{\nu(\ney)\cdot(\nex-\ney)(\nex-\ney)\cdot\nu(\nex)}{|\nex-\ney|^4}\rg\}\varphi(\ney)\de s(\ney),\quad \nex\in\Gamma,
\end{split}\end{equation} where the boundary integral in~\eqref{eq:hypersingular} has to be interpreted as a Hadamard finite-part integral. In view of the $O(|\nex-\ney|^{-2})$ asymptotic behavior of the kernel as $\Gamma\ni\ney\to\nex\in\Gamma$, numerical evaluation of the hypersingular operator~\eqref{eq:hypersingular} entails regularization via, e.g., integration by parts~\cite[Corollary 7.33]{kress2012linear}.

Similarly, the limit values of the single-layer potential~\eqref{eq:SL_pot} give rise to the single-layer operator $S:C(\Gamma)\to C^{0,\alpha}(\Gamma)$:
\begin{equation}\label{eq:SL_op}
(S\varphi)(\nex):= -\frac{1}{2\pi}\int_{\Gamma} \log|\nex-\ney|\varphi(\ney)\de s(\ney),\quad \nex\in\Gamma,
\end{equation}
which bears a weakly-singular kernel. Although integrable, the presence of the logarithmic singularity in~\eqref{eq:SL_op} makes the integrand not suitable for direct application of standard quadrature rules. The normal derivatives of the single-layer potential, on the other hand,  yields the adjoint double-layer operator $K^\top:C(\Gamma)\to C^{0,\alpha}(\Gamma)$:
\begin{equation}\label{eq:ADL_op}
(K^\top\varphi)(\nex) :=-\frac{1}{2\pi}\int_{\Gamma}\frac{\nu(\nex)\cdot(\nex-\ney)}{|\nex-\ney|^2}\varphi(\nex)\de s(\ney),\quad \nex\in\Gamma,
\end{equation}
which, as the double-layer operator, exhibits a smooth (at least continuous) kernel~\cite{perez2019harmonic}.

 
The next two theorems show that the proposed methodology can be applied to recast nearly-singular double- and single-layer potentials, as well as the hypersingular and the single-layer operators, in terms of smooth integrands of prescribed regularity. 

\begin{theorem}\label{th:double_layer}Let $\Omega\subset\R^2$ be a bounded simply connected domain with boundary $\Gamma=\{\gamma(t): t\in[0,2\pi)\}$ of class~$C^2$, and  let $\varphi\in C(\Gamma)$ be a real-valued function.  Assume that both $\gamma$ and~$\varphi\circ\gamma$ are $N$th-times continuously differentiable at $t_0\in[0,2\pi)$. Then,  the double-layer potential~\eqref{eq:DL_pot} can be expressed as: 
\begin{equation}
 (\mathcal D\varphi)(\ner)=-\real\lf\{\frac{1}{2\pi i}\int_{\Gamma}\frac{\varphi(\zeta)-P_N(\zeta,z_0)}{\zeta-z}\de \zeta  + \mathbf {1}_{\Omega}(z)P_N(z,z_0)\rg\},\label{eq:reg_dl} 
\end{equation}
for all $\ner=(\real z,\imag z)\in \R^2\setminus\Gamma$, where  $P_N(\cdot,z_0)$ is the $N$th-order  $\varphi$-interpolant at $z_0=\gamma(t_0)\in\Gamma$. 

Furthermore, assuming that $\gamma$ and $\varphi\circ\gamma$ are  $C^{N}([0,2\pi])$~functions, it holds that the hypersingular operator~\eqref{eq:hypersingular} can be expressed as:
\begin{equation}\label{eq:hyper_complex}
(T\varphi)(\nex) =-\real\lf\{\frac{\nu(z)}{2\pi i}\int_\Gamma\frac{\varphi(\zeta)-P_N(\zeta,z)}{(\zeta-z)^{2}}\de \zeta\rg\},
\end{equation}
for all $ \nex=(\real z,\imag z)\in\Gamma$.
\end{theorem}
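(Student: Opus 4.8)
The plan is to derive both identities~\eqref{eq:reg_dl} and~\eqref{eq:hyper_complex} by first writing the (real) Laplace double-layer potential and the hypersingular operator in terms of complex contour integrals, and then invoking the already-established regularized formulas~\eqref{eq:reg_formula} and~\eqref{eq:reg_formula_der} from Section~\ref{sec:nearly_singular}. The starting observation is the classical fact that for $\ner=(\real z,\imag z)\in\R^2\setminus\Gamma$ and $\ney=(\real\zeta,\imag\zeta)\in\Gamma$, the double-layer kernel can be expressed through the Cauchy kernel. Concretely, with $\de\zeta=\gamma'(\tau)\de\tau$ and noting that the complex number $\gamma'(\tau)$ encodes both the arclength element $\de s(\ney)=|\gamma'(\tau)|\de\tau$ and the (rotated) exterior normal $\nu(\ney)$, one has the standard identity
\[
\frac{\nu(\ney)\cdot(\ner-\ney)}{|\ner-\ney|^2}\de s(\ney) = -\real\lf\{\frac{1}{\imagu}\,\frac{\de\zeta}{\zeta-z}\rg\} = -\imag\lf\{\frac{\de\zeta}{\zeta-z}\rg\}.
\]
Hence, pulling the real part outside the integral,
\[
(\mathcal D\varphi)(\ner) = -\real\lf\{\frac{1}{2\pi\imagu}\int_\Gamma\frac{\varphi(\zeta)}{\zeta-z}\de\zeta\rg\} = -\real\lf\{(\mathcal C\varphi)(z)\rg\},
\]
where we used that $\varphi$ is real-valued on $\Gamma$. (It should be checked carefully that the orientation and sign conventions match those in~\eqref{eq:int_op} and~\eqref{eq:DL_pot}.) Substituting the regularized representation~\eqref{eq:reg_formula} for $(\mathcal C\varphi)(z)$ then yields~\eqref{eq:reg_dl} immediately.

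For the hypersingular operator, the approach is analogous but one level of differentiation higher. First I would establish the complex representation of the normal derivative of the double-layer potential, namely that for $\ner$ off $\Gamma$,
\[
\nu(\nex)\cdot\nabla(\mathcal D\varphi)(\ner) = -\real\lf\{\nu(z)\,(\mathcal C\varphi)'(z)\rg\},
\]
where $\nu(z)$ denotes the complex number representing the unit exterior normal at $\nex$, and $(\mathcal C\varphi)'$ is the derivative~\eqref{eq:cauchy_der} with $n=1$. This follows by differentiating the identity $(\mathcal D\varphi)(\ner)=-\real\{(\mathcal C\varphi)(z)\}$ along the normal direction: writing the directional derivative $\nu(\nex)\cdot\nabla$ acting on a harmonic function as the real part of $\nu(z)\,\p_z$ applied to the corresponding analytic function, and using that $\mathcal C\varphi$ is analytic off $\Gamma$. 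Taking the limit $\epsilon\to0$ as in~\eqref{eq:hypersingular}, the interior limit of $(\mathcal C\varphi)'$ exists by the assumed $C^N$ regularity (together with the regularization~\eqref{eq:reg_formula_der} with $n=1$, which shows the relevant integrand extends continuously up to $\Gamma$ once we set $z_0=z$). Applying~\eqref{eq:reg_formula_der} with $n=1$, $z_0=z$, and noting that the boundary term $\p_z P_N(z,z)$ is pure... in fact observing that the interior contribution $\mathbf 1_\Omega(z)\p_z P_N(z,z_0)$ and the remaining algebra combine so that only the contour integral survives in~\eqref{eq:hyper_complex}, gives the stated formula. The regularity claims about the integrands (that in parametric form they are $(N-1)$- resp. $(N-2)$-times differentiable at $\tau=t$) follow from the interpolation conditions~\eqref{eq:inter_cond} exactly as in the discussion surrounding~\eqref{eq:reg_hilbert}.

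The main obstacle, I expect, is not any deep analytic point but rather the bookkeeping in the hypersingular case: one must verify that the Hadamard finite-part interpretation of~\eqref{eq:hypersingular} is genuinely reproduced by the $\epsilon\to0$ limit of the complex derivative $-\real\{\nu(z)(\mathcal C\varphi)'(z)\}$, and that taking the real part commutes with the finite-part limit. Equivalently, one can sidestep the finite-part subtleties by working entirely with the already-regularized representation~\eqref{eq:reg_formula_der}: since for $N\geq2$ its integrand is continuous up to $\Gamma$ when $z_0=z$, the limit $\epsilon\to0$ can be passed under the integral sign with no ambiguity, and the finite-part value of~\eqref{eq:hypersingular} is \emph{defined} to be precisely this limit. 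A secondary technical point worth spelling out is the precise relation between the complex normal $\nu(z)$ and $\gamma'(t_0)$ (a unit complex number obtained by rotating $\gamma'(t_0)/|\gamma'(t_0)|$ by $-\pi/2$ for counterclockwise orientation), so that the factor $\nu(z)$ in~\eqref{eq:hyper_complex} is correctly identified and the sign is unambiguous.
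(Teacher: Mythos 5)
Your treatment of the double-layer identity~\eqref{eq:reg_dl} coincides with the paper's: write $(\mathcal D\varphi)(\ner)=-\real\{(\mathcal C\varphi)(z)\}$ and substitute the regularized representation~\eqref{eq:reg_formula}; that part is fine. For the hypersingular formula, however, there is a genuine gap at precisely the point you elide with ``\dots''. After inserting the regularized derivative formula~\eqref{eq:reg_formula_der} with $n=1$ and letting $z$ tend to the boundary point $z_0$, the polynomial contribution $\mathbf 1_{\Omega}(z)\,\tfrac{\p}{\p z}P_N(z,z_0)$ does \emph{not} simply drop out: its limit is $c_1(z_0)=\phi'(t_0)/\gamma'(t_0)$, a nonzero complex number in general, while the claimed identity~\eqref{eq:hyper_complex} contains no such term. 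What must be proved is that $\real\{\nu(z_0)\,c_1(z_0)\}=0$, and this is exactly where the paper invokes two specific facts: (a) $\lim_{\tau\to t}\p_\tau p_N(\tau,t)=\phi'(t)$ is \emph{real} because $\varphi$ is real-valued (via the interpolation conditions~\eqref{eq:inter_cond}), and (b) $\real\{\nu(t)/\gamma'(t)\}=0$ because the normal is orthogonal to the tangent, so $\nu/\gamma'$ is purely imaginary. Your phrase ``the boundary term $\p_z P_N(z,z)$ is pure\dots{} the remaining algebra combine so that only the contour integral survives'' asserts the conclusion without carrying out this cancellation; note that the statement would actually be false for complex-valued densities, so the real-valuedness of $\varphi$ must enter the argument explicitly.

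Beyond that, the paper's route differs from yours in a way that avoids your other loose end. Rather than passing $\epsilon\to0$ under the integral sign in~\eqref{eq:reg_formula_der} (your ``no ambiguity'' claim still needs a uniform bound, e.g.\ dominated convergence using $|\zeta-z|\geq c\max(|\zeta-z_0|,\delta)$ for $C^2$ curves and $N\geq2$), the paper works directly on the curve: it differentiates the Sokhotski--Plemelj identity~\eqref{eq:cauchy_on_b} in the parameter to obtain the finite-part identity~\eqref{eq:sub} for the interpolant, writes $T\varphi$ in parametric finite-part form, and subtracts. Either route can be completed, but both hinge on the cancellation of the leftover term described above, which is the actual content missing from your proposal.
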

\begin{proof}
We begin the proof by expressing the double-layer potential as~\cite{kress2012linear}
\begin{equation}\label{eq:corres_1}
 (\mathcal D\varphi)(\ner)=-\real\{(\mathcal C\varphi)(z)\} ,\quad \ner = (\real z,\imag z)\in\Omega\setminus\Gamma,
\end{equation} where the correspondence $\varphi(\ney) =\varphi(\zeta)$, $\ney=(\real \zeta,\imag \zeta)\in\Gamma$, between real and complex variables, has been used. Since $\gamma$ and $\varphi\circ \gamma$ are $N$~times differentiable at $t_0$, we have that $\varphi$ admits a density interpolant~$P_N$ at $z_0\in\Gamma$. Formula~\eqref{eq:reg_dl}  is hence directly obtained from the regularized expression for the Cauchy operator~\eqref{eq:reg_formula}.

Using the fact that the Cauchy operator~\eqref{eq:int_op} defines and analytic function in~$\C\setminus\Gamma$~\cite{kress2012linear}, on the other hand,  it follows from the Cauchy-Riemann equations that the gradient of the double-layer potential can be expressed as
\begin{equation}\label{eq:corres_2}
\nabla (\mathcal D\varphi)(\ner) = \lf(-\real\{{(\mathcal C\varphi)'(z)}\},\imag\{{(\mathcal C\varphi)'(z)}\}\rg),\quad\ner\in\Omega\setminus\Gamma.
\end{equation} 
Therefore, the hypersingular operator~\eqref{eq:hypersingular} can be expressed as
\begin{equation*}\label{eq:hyper_complex2}
( T\varphi)(\nex) =-\lim_{\epsilon\to 0}\real\{\nu(z)(\mathcal C\varphi)'(z+\epsilon\nu(z))\}=-\real\lf\{\frac{\nu(t)}{2\pi i}\,{\rm f.p.}\!\!\int_0^{2\pi}\frac{\phi(\tau)}{(\gamma(\tau)-\gamma(t))^{2}}\gamma'(\tau)\de \tau\rg\},
\end{equation*}
for all $\nex=(\real z,\imag z)\in\Gamma$, where $z =  \gamma(t)$, and $\phi = \varphi\circ \gamma$. The integral above can then be regularized by means of the identity
\begin{equation}\label{eq:sub}
\frac{1}{\gamma'(t)}\frac{\p}{\p t}{p_N(t, t_0)} =\frac{1}{\pi i}\,{\rm f.p.}\!\!\int_0^{2\pi}\frac{p_N(\tau,t_0)}{(\gamma(\tau)-\gamma(t))^2}\gamma'(\tau)\de \tau
,\quad t\in[0,2\pi),
\end{equation}
which is obtained by (carefully) differentiating $(H\varphi)(\gamma(t))$ in~\eqref{eq:cauchy_on_b} with respect to $t$.  Taking $t_0=t$ in~\eqref{eq:sub} and subtracting it from~\eqref{eq:hyper_complex} we finally arrive at 
\begin{equation*}\label{eq:hyper_complex_2}
(T\varphi)(\nex) =-\real\lf\{\frac{\nu(t)}{2\pi i}\int_0^{2\pi}\frac{\phi(\tau)-p_N(\tau,t_0)}{(\gamma(\tau)-\gamma(t))^{2}}\gamma'(\tau)\de \tau\rg\},\quad \nex=(\real\gamma(t),\imag\gamma(t))\in\Gamma,
\end{equation*}
which is obtained from: (a) 
$\lim_{\tau\to t}\p p_N(\tau,t)/\p \tau=\phi'(t) \in\R,$
that follows from the interpolation conditions~\eqref{eq:inter_cond} and that $\phi$ is a real valued function, and; (b)~$\real\{\nu(t)/\gamma'(t)\} = 0$ for all $t\in[0,2\pi)$. This concludes the proof.

\end{proof}

We now consider the single-layer potential and the single-layer operator. 

\begin{theorem}\label{th:single_layer}
Let $\Omega\subset\R^2$ be a bounded simply connected domain with boundary $\Gamma=\{\gamma(t): t\in[0,2\pi)\}$ of class~$C^2$, and let $\varphi\in C(\Gamma)$ be a real-valued function.  Let $\psi \in C(\Gamma)$ be defined in complex parametric form as  $\psi\circ \gamma=(\varphi\circ\gamma)\frac{|\gamma'|}{\gamma'}$, and assume that both~$\gamma$ and~$\psi\circ\gamma$ are $N$th-times continuously differentiable at $t_0\in[0,2\pi)$. Let also $\log(\cdot-z)$ be defined so that either its branch-cut path (see Figure~\ref{fig:branchCut}):
\begin{itemize}
\item Starts at $z\in\Omega$, exits $\Omega$ at  $z_0\in\Gamma$, and extends to infinity;
\item starts at $z\in\Omega'$ and extends to infinity without intersecting~$\Gamma$; or
\item starts at $z\in\Gamma$ and extends to infinity intersecting~$\Gamma$ only at $z$.
\end{itemize}Then, the single-layer potential~\eqref{eq:SL_pot} can be recast~as: 
\begin{equation}\label{eq:SL_pot_reg}
(\mathcal S\varphi)(\ner) = \imag\lf\{\frac{1}{2\pi i}\int_\Gamma \log(\zeta-z)\lf\{\psi(\zeta)-Q_N(\zeta,z_0)\rg\}\de \zeta- \mathbf{ 1}_{\Omega}(z)\int_{z_0}^z Q_N(\eta,z_0)\de\eta\rg\},
\end{equation}
for all $\ner=(\real z,\imag z)\in \R^2\setminus\Gamma$, where  $Q_N(\cdot,z_0)$ is the $N$th-order $\psi$-interpolant at $z_0=\gamma(t_0)\in\Gamma$.  Moreover, assuming that $\gamma$ and $\psi\circ\gamma$ are $C^{N}([0,2\pi])$ functions, it holds that the single-layer operator~\eqref{eq:SL_op} can be expressed as:
\begin{equation}\label{eq:SL_op_reg}
 (S\varphi)(\nex)=\imag\lf\{\frac{1}{2\pi i}\int_{\Gamma}\log(\zeta-z)\lf\{\psi(\zeta)-Q_N(\zeta,z)\rg\}\de \zeta  \rg\}, 
\end{equation}
for all $ \nex=(\real z,\imag z)\in\Gamma$. 
\end{theorem}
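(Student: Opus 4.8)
The plan is to follow the same pattern as the proof of Theorem~\ref{th:double_layer}: first cast the single-layer potential as the imaginary part of a logarithmic Cauchy-type integral carrying the rotated density $\psi$, then subtract the $\psi$-interpolant $Q_N$ and show that the subtracted logarithmic integral collapses---through integration by parts combined with the Cauchy integral formula and the Cauchy--Goursat theorem---to the explicit antiderivative term $\mathbf 1_\Omega(z)\int_{z_0}^z Q_N(\eta,z_0)\de\eta$.

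First I would record the branch-independent identity $\log|\ner-\ney|=\real\log(\zeta-z)$ and use the arclength substitution $\de s(\ney)=|\gamma'|\de\tau=(|\gamma'|/\gamma')\de\zeta$ together with the definition $\psi\circ\gamma=(\varphi\circ\gamma)|\gamma'|/\gamma'$ to rewrite $\varphi(\ney)\de s(\ney)=\psi(\zeta)\de\zeta$; note that the latter differential is real-valued along $\Gamma$ since $\varphi$ is real. A short bookkeeping computation---using $1/(2\pi i)=-i/(2\pi)$, so that $\imag\{\frac{1}{2\pi i}(A+iB)\}=-A/(2\pi)$ whenever $A,B\in\R$---then yields
\begin{equation*}
(\mathcal S\varphi)(\ner)=\imag\lf\{\frac{1}{2\pi i}\int_\Gamma\log(\zeta-z)\,\psi(\zeta)\de\zeta\rg\},\quad \ner=(\real z,\imag z),
\end{equation*}
valid for $z\in\R^2\setminus\Gamma$ and also for $z\in\Gamma$, where the logarithmic singularity is integrable.

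Next I would split $\psi=(\psi-Q_N(\cdot,z_0))+Q_N(\cdot,z_0)$ and evaluate the contribution of the interpolant. Setting $R_N(\zeta):=\int_{z_0}^\zeta Q_N(\eta,z_0)\de\eta$, which is a polynomial (hence entire) with $R_N'=Q_N(\cdot,z_0)$ and $R_N(z_0)=0$, integration by parts along the closed curve gives $\int_\Gamma\log(\zeta-z)Q_N(\zeta,z_0)\de\zeta=[\log(\zeta-z)R_N(\zeta)]_\Gamma-\int_\Gamma R_N(\zeta)/(\zeta-z)\de\zeta$. This is where the branch-cut hypotheses are used: under each of the three listed prescriptions, $\log(\cdot-z)$ restricted to $\Gamma$ is single-valued and continuous except at most at one point---namely $z_0$---and $R_N$ vanishes there (at least to first order, which also tames the logarithmic blow-up in the on-curve case), so the bracketed boundary term is zero. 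The remaining integral is then supplied by the Cauchy integral formula and the Cauchy--Goursat theorem applied to the entire function $R_N$: $\frac{1}{2\pi i}\int_\Gamma R_N(\zeta)/(\zeta-z)\de\zeta=\mathbf 1_\Omega(z)R_N(z)$. Hence $\frac{1}{2\pi i}\int_\Gamma\log(\zeta-z)Q_N(\zeta,z_0)\de\zeta=-\mathbf 1_\Omega(z)\int_{z_0}^z Q_N(\eta,z_0)\de\eta$, and substituting back into the representation of the previous paragraph yields~\eqref{eq:SL_pot_reg}.

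Finally, for the single-layer operator~\eqref{eq:SL_op_reg} I would take $z_0=z\in\Gamma$. The antiderivative $R_N(\zeta)=\int_z^\zeta Q_N(\eta,z)\de\eta$ again vanishes at $z$, so $R_N(\zeta)/(\zeta-z)$ extends to a polynomial and $\int_\Gamma R_N(\zeta)/(\zeta-z)\de\zeta=0$ by Cauchy--Goursat, while the boundary term vanishes as before; thus the $Q_N$-contribution disappears entirely and~\eqref{eq:SL_op_reg} follows from the on-curve case of the representation established in the second paragraph. (Alternatively, one may simply pass to the limit $\ner\to\nex$ from within $\Omega$ in~\eqref{eq:SL_pot_reg}, using the continuity of $\mathcal S\varphi$ across $\Gamma$ and the fact that $\int_{z_0}^z Q_N(\eta,z_0)\de\eta\to0$ as $z,z_0\to\nex$.) The main delicate point throughout is precisely the branch-cut bookkeeping in the integration-by-parts step: the three conditions imposed on the cut of $\log(\cdot-z)$ are exactly what is needed to confine all boundary and singular contributions to the single interpolation node $z_0$, which the choice of antiderivative then annihilates.
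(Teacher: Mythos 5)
Your proposal is correct, and its overall skeleton (complex-log representation of $\mathcal S$ with the rotated density $\psi$, then isolating the $Q_N$-contribution via Cauchy's formula and Cauchy--Goursat, then subtracting) matches the paper's. The central step is executed by a genuinely different mechanism, however. The paper evaluates $\int_{z_0}^z Q_N(\eta,z_0)\de\eta$ by applying the Cauchy formula~\eqref{eq:CF} to $Q_N(\eta,z_0)$ at each $\eta$ on a path from $z_0$ to $z$ and interchanging the order of integration, which produces the kernel $\log(\zeta-z)-\log(\zeta-z_0)$ with a carefully chosen second branch $\log(\cdot-z_0)$ (equations~\eqref{eq:sl_1}--\eqref{eq:sl_2}), and then disposes of the extra term through the auxiliary identity $\int_\Gamma\log(\zeta-z_0)Q_N(\zeta,z_0)\de\zeta=0$, proved monomial-by-monomial with antiderivatives~\eqref{eq:sl_3}. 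You instead integrate by parts against the polynomial antiderivative $R_N(\zeta)=\int_{z_0}^\zeta Q_N(\eta,z_0)\de\eta$, kill the boundary term using $R_N(z_0)=0$ together with the hypothesis that the cut of $\log(\cdot-z)$ meets $\Gamma$ only at $z_0$ (the reading intended by Figure~\ref{fig:branchCut}; the paper's own argument relies on the same reading), and then apply the Cauchy formula or Goursat's theorem directly to $R_N(\zeta)/(\zeta-z)$. Your route buys a more streamlined argument: no second branch $\log(\cdot-z_0)$ is ever introduced, the auxiliary lemma~\eqref{eq:sl_3} is absorbed into the vanishing of the boundary term, and the on-curve identity~\eqref{eq:SL_op_reg} is obtained directly (since $R_N(\zeta)/(\zeta-z)$ extends to a polynomial when $z_0=z$) rather than by the limiting argument the paper uses, where your factor $|\zeta-z|\log|\zeta-z|\to0$ observation correctly tames the logarithmic blow-up. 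What the paper's route buys in exchange is that the appearance of the path integral $\int_{z_0}^z Q_N$ is structurally transparent from the start, and no integration by parts across a jump discontinuity of the logarithm on $\Gamma$ needs to be justified (in your case 1 this is a finite $2\pi i$ jump at $z_0$, handled correctly by splitting the contour there, so there is no gap---just a step worth stating explicitly). Your sign bookkeeping and the branch-independent reduction $(\mathcal S\varphi)(\ner)=\imag\{\frac{1}{2\pi i}\int_\Gamma\log(\zeta-z)\psi(\zeta)\de\zeta\}$, including its validity on $\Gamma$ where the kernel is weakly singular, agree with the paper.
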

\begin{figure}[ht]
\centering	
\subfloat[$z\in\Omega.$]{\includegraphics[scale=1]{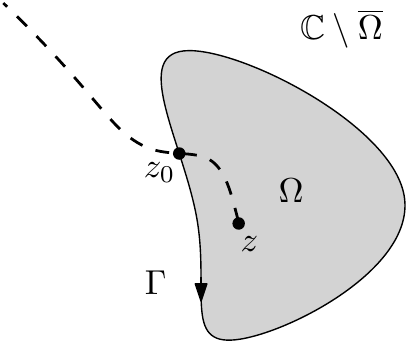}\label{fig_in}}\qquad
\subfloat[$z\in\Omega'=\C\setminus\overline\Omega.$]{\includegraphics[scale=1]{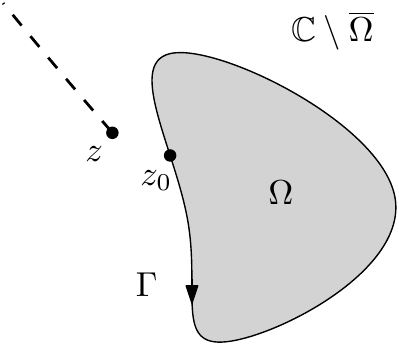}\label{fig_out}}\qquad
\subfloat[$z\in\Gamma.$]{\includegraphics[scale=1]{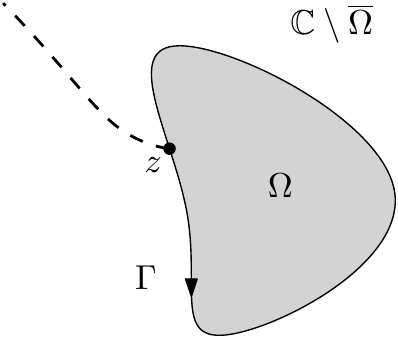}\label{fig_on}}
 \caption{Definition of the branch cut (dashed line) of $\log(\cdot-z)$ in Theorem~\ref{th:single_layer}, for the three relevant locations of the branch point~$z$.}\label{fig:branchCut}
\end{figure}  
\begin{proof}
First, we note that the single-layer potential~\eqref{eq:SL_pot} can be expressed as
\begin{equation}\label{eq:sl_0}
(\mathcal S\varphi)(\ner) = \imag\lf\{\frac{1}{2\pi i}\int_\Gamma \log(\zeta-z)\psi(\zeta)\de \zeta\rg\},\quad \ner = (\real z,\imag z)\in\R^2\setminus\Gamma,
\end{equation}
where $\psi\circ \gamma=(\varphi\circ\gamma)\frac{|\gamma'|}{\gamma'}$, independent of the selection of the branch of the logarithm. 

From the fact that $\gamma$ and $\psi\circ\gamma$ are $N$~times differentiable at $t_0$, it follows that $\psi$ admits a density interpolant $Q_N(\cdot,z_0)$ at $z_0=\gamma(t_0)\in\Gamma$, which is an entire function. For $z\in\Omega$ it holds,  by the Cauchy integral formula~\eqref{eq:CF}, that:
\begin{equation}\label{eq:sl_1}\begin{split}
\int_{z_0}^z Q_N(\eta,z_0)\de \eta=&\frac{1}{2\pi i}\int_\Gamma\lf(\int_{z_0}^z\frac{1}{\zeta-\eta}\de\eta\rg)Q_N(\zeta,z_0)\de\zeta=\\
&-\frac{1}{2\pi i}\int_\Gamma\lf\{\log(\zeta-z)-\log(\zeta-z_0)\rg\}Q_N(\zeta,z_0)\de\zeta,
\end{split}\end{equation}
where the branch of $\log(\cdot-z_0)$ is selected so that the resulting branch cut of the function inside the curly brackets is a simple open curve contained in $\Omega\cup\{z_0\}$, that connects $z\in\Omega$ and $z_0\in\Gamma$. Similarly, for $z\in\Omega'$ we have, by the Cauchy-Goursat theorem, that
\begin{equation}\label{eq:sl_2}\begin{split}
0=\frac{1}{2\pi i}\int_\Gamma\lf(\int_{z_0}^z\frac{1}{\zeta-\eta}\de\eta\rg)Q_N(\zeta,z_0)\de\zeta=-\frac{1}{2\pi i}\int_\Gamma\lf\{\log(\zeta-z)-\log(\zeta-z_0)\rg\}Q_N(\zeta,z_0)\de\zeta,
\end{split}\end{equation}
where the branch of $\log(\cdot-z_0)$ is chosen such that the branch cut of $\log(\cdot-z)-\log(\cdot-z_0)$ is a simple curve contained in $\Omega'\cup\{z_0\}$, that connects $z\in\Omega'$ and $z_0\in\Gamma$.
By construction then, in both cases the branch cut of $\log(\cdot-z_0)$ intersects  $\Gamma$ only at $z_0\in\Gamma$.

In order to simplify the formulae in~\eqref{eq:sl_1} and~\eqref{eq:sl_2} we consider the improper integrals:
\begin{equation*}
I_j=\int_\Gamma \log(\zeta-z_0)(\zeta-z_0)^{j}\de \zeta\quad\mbox{for } \quad j=0,\ldots,N.
\end{equation*}
Writing~$I_j$ in parametric form and using the $2\pi$-periodicity of $\gamma$,  we obtain
\begin{equation*}
I_j=\int_{t_0}^{t_0+2\pi} \log(\gamma(t)-z_0)(\gamma(t)-z_0)^{j}\gamma'(t)\de t=\lim_{\epsilon\to 0+}\lf.\frac{\log(\zeta-z_0)(\zeta-z_0)^{j+1}}{j+1}\rg|_{\zeta=\gamma(t_0-\epsilon)}^{\zeta=\gamma(t_0+\epsilon)}=0,
\end{equation*}
for all $j=0,\ldots,N$. Therefore, since the $\psi$-interpolant takes the form $Q_N(z,z_0)=\sum_{j=0}^N\frac{b_j(z_0)}{j!}(z-z_0)^{j}$, with coefficients $b_j(z_0)$, $j=0,\ldots,N$, depending on  $\psi\circ\gamma$ and its derivatives at $t_0$,  we conclude that 
\begin{equation}\label{eq:sl_3}
\frac{1}{2\pi i}\int_\Gamma\log(\zeta-z_0)Q_N(\zeta,z_0)\de\zeta =0,
\end{equation}
in both~\eqref{eq:sl_1} and~\eqref{eq:sl_2}.
%

Therefore, combining~\eqref{eq:sl_0},~\eqref{eq:sl_1},~\eqref{eq:sl_2} and \eqref{eq:sl_3}, we arrive at
\begin{equation}\label{eq:sl_reg_reg}\begin{split}
(\mathcal S\varphi)(\ner) =&~\imag\lf\{\frac{1}{2\pi i}\int_\Gamma \log(\zeta-z)\lf\{\psi(\zeta)-Q_N(\zeta,z_0)\rg\}\de\zeta-\mathbf{1}_{\Omega}(z)
 \int_{z_0}^zQ_N(\eta,z_0)\de\eta\rg\}
\end{split}\end{equation}
for all $\ner = (\real z,\imag z)\in\R^2\setminus\Gamma.$ 

Finally, taking the limit of~\eqref{eq:sl_reg_reg} as $\C\setminus\Gamma\ni z\to z_0\in\Gamma$, we obtain
$$
(S\varphi)(\nex)=\imag\lf\{\frac{1}{2\pi i}\int_\Gamma \log(\zeta-z_0)\lf\{\psi(\zeta)-Q_N(\zeta,z_0)\rg\}\de \zeta\rg\},
$$ for all $\nex=(\real z_0,\imag z_0)\in\Gamma$. This concludes the proof.
\end{proof}

\begin{remark}
We here discuss the construction of the function $\log(\cdot-z)$ in the definition of the single-layer potential, which has to fulfill the contour-dependent conditions stated in Theorem~\ref{th:single_layer} (see Figure~\ref{fig:branchCut}). In the case of a contour $\Gamma$ enclosing a star-shaped domain $\Omega$ with respect to $z^\star\in\Omega$, the branch cut of the logarithm $\log(\cdot -z)$ can be simply selected as the path stemming from $z$ and extending along the straight line~$\{w=z+s(z-z^\star)\in\C, s\geq 0\}$. 

For more intricate contours, however, defining a suitable branch cut of $\log(\cdot-z)$ could be a painstaking process. To help defining $\log(\cdot-z)$ in a systematic manner, it can be expressed as
\begin{equation}\label{eq:log_del}
\log(\cdot-z) =\operatorname{Log}\lf(\frac{\cdot-z}{\cdot-z_0}\rg)+\operatorname{Log}\lf(\frac{\cdot-z_0}{\cdot-z_1}\rg)+\cdots+\operatorname{Log}\lf(\frac{\cdot-z_{Q-1}}{\cdot-z_{Q}}\rg)+\log_{\tau}(\cdot-z_Q),
\end{equation}
where $\operatorname{Log}$ is the principal branch and where  $\log_{\tau}(\cdot-z_Q)$ has its branch cut along the straight line $\{w=z_Q+s\tau\in\C, s\geq 0\}$  $(0\neq\tau\in\C)$. The resulting brach-cut path of~\eqref{eq:log_del} is a piecewise linear curve starting at $z$, passing through the control points ${z_q}$, $q=0,\ldots,Q$, and then extending to infinity in the direction determined by $\tau$. Making use of a triangulation of an annular region $B_R\setminus\overline\Omega$, where  $B_R = \{z\in\C:|z-z^*|<R\}$ is a disk containing~$\Omega$, the control points can be selected as the centroids of elements forming a ``chain" consisting of triangles sharing a common segment.  The chain starts from the triangle containing $z$, if $z\in B_R\setminus\overline\Omega$, or from the triangle containing $z_0={\rm arg}\min_{\zeta\in\Gamma}|z-\zeta|$, if $z\in\overline\Omega$, and ends at a triangle intersecting the outer boundary $\p B_R$. The direction the line extending from $z_Q$ to infinity can be selected as $\tau = z_Q-z^*,$ where $z_Q$ is the centroid of the last triangle of the chain.

\end{remark}

\begin{remark} It follows from equations~\eqref{eq:corres_1} and~\eqref{eq:sl_0} that also the gradients of the double- and single-layer layer potentials can be regularized by means of the proposed methodology. Indeed, from~\eqref{eq:corres_2}  the gradient of the double-layer potential can be expressed in terms of the derivative of the Cauchy operator, which can in turn be recast as~\eqref{eq:reg_formula_der}. Similarly, for the gradient of the single-layer potential we have the identity
$$
\frac{\de }{\de z}\lf\{\frac{1}{2\pi i}\int_\Gamma \log(\zeta-z)\psi(\zeta)\de \zeta\rg\} = -\frac{1}{2\pi i}\int_\Gamma \frac{\psi(\zeta)}{\zeta-z}\de \zeta = -(\mathcal C\psi)(z),\quad z\in\C\setminus\Gamma,
$$
where, as in Theorem~\ref{th:single_layer}, $\psi\circ \gamma= (\varphi\circ\gamma) \frac{|\gamma'|}{\gamma'}$. Using the Cauchy-Riemann equations, this identity leads to
$$
\nabla (\mathcal S\varphi)(\ner) = -\lf(\imag (\mathcal C\psi)(z),\real (\mathcal C\psi)(z)\rg),\quad\ner = (\real z,\imag z)\in\R^2\setminus\Gamma,$$
where the Cauchy operator can be regularized by means of~\eqref{eq:reg_formula}.
\end{remark}

It is thus clear from the results in this section that, provided $\varphi$ and $\Gamma$ are smooth enough, the density interpolation technique can be applied to regularize singular and nearly singular integrals associated with the evaluation of the Laplace double- and single-layer potentials, their gradients, and all four integral operators of Calder\'on calculus.

\subsection{Conformal mapping }
Yet another important application of the proposed methodology concerns conformal mapping, for which we present a straightforward integral equation method based on the double-layer formulation of both interior and exterior Laplace Dirichlet problems. Some related works on this huge subject include the single-layer formulation put forth by G.~T.~Symm in~\cite{symm1966integral,symm1967numerical} and the recent contribution~\cite{wala2018conformal} which, as the one considered here, relies on a double-layer formulation. (We refer the interested reader to~\cite[Sec.~16.7]{henriciVol3} and to the more recent handbook~\cite{kythe2019handbook} for surveys on second-kind integral equations for conformal mapping applications.) 

\paragraph{Interior regions.}We first consider the problem of mapping a simply connected domain $\Omega\subset\C$, with Jordan boundary $\p\Omega=\Gamma$ of class $C^2$ enclosing the origin, conformally onto the unit disk $D=\{z\in\C:|z|<1\}$. 

Let~$f_i:\overline\Omega\to\overline D$ denote the unique conformal mapping satisfying $f_i(0) = 0$ and $f'_i(0)>0$~\cite[Corollary 5.10c]{henriciVol3}. Consider then the function $g_i(z) = \log(f_i(z)/z)$ which is analytic in $\Omega$ and satisfies $\real g_i(z) = -\log |z|$ for $z\in\Gamma$. Clearly, $f_i(z) = z\e^{g_i(z)}$ satisfies $f_i(0)=0$. Now, writing $g_i(z) = u_i(\ner) + iv_i(\ner)+i\alpha$, $\ner = (\real z,\imag z)$, $\alpha\in\R$, it follows from Cauchy-Riemann equations that~$u_i$ and~$v_i$ are harmonic conjugates of each other in $\Omega$.  Therefore, in particular, $u_i\in C^2(\Omega)\cap C(\overline\Omega)$ is the unique solution~\cite[Theorem 6.23]{kress2012linear} of the following Dirichlet interior boundary value problem:
$$
\Delta u_i=0\ \mbox{ in }\ \Omega,\quad u_i(\nex)= -\log|\nex|, \quad \nex\in\Gamma.
$$

Adopting the notation used in Section~\ref{sec:DL_formulation} and looking for a solution in the form of the double-layer potential:
$$
u_i(\ner) = (\mathcal D\varphi_i)(\ner)=-\real\{(\mathcal C\varphi)(z)\},\quad \ner=(\real z,\imag z)\in\Omega,
$$
we obtain that the unknown real-valued density $\varphi_i\in C(\Gamma)$ is the unique solution  of the second-kind integral equation~\cite[Theorems 6.21 and 6.22]{kress2012linear}
\begin{equation}\label{eq:int_IE}
-\frac{\varphi_i(\nex)}{2} + (K\varphi_i)(\nex)=-\log|\nex|,\quad\nex\in\Gamma,
\end{equation}
where the operator $K:C(\Gamma)\to C(\Gamma)$ is the double-layer operator~\eqref{eq:DL_op}. Upon solving~\eqref{eq:int_IE} for~$\varphi_i$ and using that 
$$v_i(\ner)=-\imag\{(\mathcal C\varphi_i)(z)\}, \quad\ner\in\Omega,$$
is a harmonic conjugate of $u_i$, we conclude that the sought conformal mapping is given by
\begin{equation}\label{eq:conf_map}
f_i(z) =z\exp\lf(-(\mathcal C\varphi_i)(z)-iv_i(\mathbf 0)\rg),\quad z\in\Omega.
\end{equation}
where the real constant $\alpha$ has been selected so that $f'_i(0)=\exp(u_i(\mathbf 0))>0$.

\paragraph{Exterior regions.}Consider now the problem of mapping $\Omega' =\C\setminus\overline\Omega$ conformally onto the exterior domain of the unit disk $D' = \{z\in\C:|z|>1\}$.  

As is well known~\cite[Theorem 5.10c]{henriciVol3}, there exists a unique mapping function $f_e:\overline{\Omega'}\to\overline{D'}$ such that $f_e(\infty) = \infty$ with Laurent series at infinity given by
$$
f_e(z) = \upgamma^{-1} z+a_0+a_1z^{-1}+\cdots,
$$ where $\upgamma>0$ is the so-called capacity of $\Gamma$.  As in the interior region case, looking for a conformal mapping of the form $f_e(z) = z\exp(g_e(z))$,t we have that $g_e(z) =u_e(\ner)+iv_e(\ner)$, $\ner=(\real z,\imag z)$, is analytic in $\Omega'$ and satisfies $\real g_e(z)=-\log|z|$ for $z\in\Gamma$. Therefore,  $u_e$ and $v_e$ are harmonic conjugate of each other and  $u_e\in C^2(\Omega')\cap C(\overline{\Omega'})$ is the unique bounded solution~\cite[Theorem 6.25]{kress2012linear} of the exterior Dirichlet problem:
$$
\Delta u_e=0\ \mbox{ in }\ \Omega',\quad u_e(\nex)=-\log|\nex|, \quad \nex\in\Gamma.
$$
Following~\cite{kress2012linear}, we look for the solution in the form of a modified double-layer potential:
\begin{equation}\label{eq:dl_pot_mod}
u_e(\ner) = (\mathcal D\varphi_e)(\ner)+\int_\Gamma \varphi_e(\ney)\de s(\ney)=-\real\{(\mathcal C\varphi)(z)\}+\int_\Gamma \varphi_e(\ney)\de s,\  \ner=(\real z,\imag z)\in\Omega'.
\end{equation}
Imposing the boundary condition on $\Gamma$, we thus arrive at the following uniquely solvable second-kind integral equation~\cite[Theorems 6.24 and 6.25]{kress2012linear} for the unknown density $\varphi_e\in C(\Gamma)$:
\begin{equation}\label{eq:ext_IE}
\frac{\varphi_e(\nex)}{2} + (K\varphi_e)(\nex)+\int_\Gamma \varphi_e(\ney)\de s(\ney)=-\log|\nex|,\quad\nex\in\Gamma.
\end{equation}
The unique conformal mapping satisfying the condition $\upgamma^{-1} = \lim_{z\to\infty}f_e(z)/z>0$ is thus given by
\begin{equation}\label{eq:ext_conf_map}
f_e(z)= z\exp\lf(-(\mathcal C\varphi_e)(z)+\int_\Gamma \varphi_e(\ney)\de s\rg),\quad z\in\Omega'.
\end{equation}

As we show in the numerical examples presented in Section~\ref{sec:examples}, the proposed methodology can be directly applied to produce accurate numerical evaluations of both interior~\eqref{eq:conf_map} and exterior~\eqref{eq:ext_conf_map} conformal mappings at points close to and on the contour $\Gamma$. 

%
%
%
%

\subsection{Biharmonic equation, Stokes flow, and elastostatics}
We briefly mention other linear elliptic boundary value problems whose solutions can be formulated in terms of Cauchy-like integral operators. These are Stokes flow~\cite{Greengard:2004dg,greengard1996integral,kropinski1999integral,Kropinski:2002fd} and elastostatic~\cite{greengard1996integral,helsing2001complex} problems in the plane, which relying on the complex variable theory for the biharmonic equation~\cite{greenbaum1992numerical,mayo1984fast}, make heavy use of Goursat potentials sought as
\begin{equation*}\begin{split}
\phi(z)=\frac{1}{2 \pi i} \int_{\Gamma} \frac{\omega(\zeta)}{\zeta-z} \mathrm{d} \zeta\quad\mbox{and}\quad
\psi(z)=\frac{1}{2 \pi i} \int_{\Gamma} \frac{\bar{\omega}(\zeta) \mathrm{d} \zeta+\omega(\zeta)\de\overline\zeta}{\zeta-z}-\frac{1}{2 \pi {i}} \int_{\Gamma} \frac{\bar{\zeta} \omega(\zeta)}{(\zeta-z)^{2}} \mathrm{d} \zeta.\end{split}\end{equation*} 

Clearly, these contour integrals can be directly regularized by means of  the proposed methodology at points $z\in\C\setminus\Gamma$ close to the contour $\Gamma$. For instance, writing the second integral in parametric form, we obtain
$$\int_{\Gamma} \frac{\bar{\omega}(\zeta) \mathrm{d} \zeta+\omega(\zeta)\de\overline\zeta}{\zeta-z} = \int_0^{2\pi}\frac{\phi(\tau)}{\gamma(\tau)-\gamma(t)}\gamma'(\tau)\de \tau,
$$
where $\phi(\tau) = \overline{w(\gamma(\tau))}+w(\gamma(\tau))\overline{\gamma'(\tau)}/\gamma'(\tau)$ with $\gamma:[0,2\pi)\to\Gamma$ being the contour parametrization. Regularization of this integral can be directly achieved by writing the resulting Cauchy operator as in~\eqref{eq:reg_formula} in terms of the density interpolant of $\varphi=\phi\circ\gamma^{-1}$.

\section{Numerics}\label{sec:numerics}
This section presents numerical procedures for the implementation of the proposed density interpolation technique in the case of smooth and piecewise smooth contours $\Gamma$. We focus here on the numerical evaluation of the regularized Cauchy integral~\eqref{eq:reg_formula} and its derivatives~\eqref{eq:reg_formula_der} which entail the construction of the $N$th-order density interpolant $P_N$ and evaluation of the contour integral
\begin{equation}\label{eq:model_int}
\int_\Gamma f(\zeta)\de \zeta\quad\mbox{with}\quad f(\zeta):= \frac{n!}{2\pi i}\frac{\varphi(z)-P_N(\zeta,z_0)}{(z-\zeta)^{n+1}},
\end{equation}
where  $z_0\in\Gamma$, $z\in\C\setminus\Gamma$, and  $N>n\geq 0$.

The same procedures can be used for the evaluation of the regularized Cauchy principal-value integral in~\eqref{eq:reg_hilbert}, the regularized finite-part integral in~\eqref{eq:hyper_complex_2}, and the regularized weakly singular integral in~\eqref{eq:SL_op_reg}, provided proper care is taken at the singular point; meaning that the corresponding integrand there is either set to zero or computed by means of a L'Hospital limit, depending on the interpolation order $N$ used.

\subsection{Smooth contours} \label{sec:smooth}
Consider first a smooth Jordan curve $\Gamma$ that admits a global smooth $2\pi$-periodic parametrization $\gamma:[0,2\pi)\to\C$. Applying the trapezoidal quadrature rule the regularized contour integral~\eqref{eq:model_int} can be directly approximated as 
\begin{subequations}\label{eq:trap_rule}
\begin{equation}\label{eq:trap}
\int_\Gamma f(\zeta)\de \zeta=\int_\Gamma f(\gamma(t))\gamma'(t)\de t\approx \frac{2\pi}{M}\sum_{m=1}^Mf(\gamma(t_m))\gamma'(t_m),
\end{equation}
where the quadrature nodes are given by  
\begin{equation}\label{eq:uniform}
t_m:=\frac{2\pi}{M}(m-1)\quad \mbox{for} \quad m=1,\ldots,M.\end{equation}\label{eq:trap_rule_total}\end{subequations}

Given $z\in\C\setminus\Gamma$, the interpolation point $z_0\in\Gamma$~\eqref{eq:ns_point} is approximated from the discrete set of contour points $\{\gamma(t_m)\}_{m=1}^{M}$, i.e., $z_0\approx z^*$ with  $$z^*=\gamma(t^*)\quad\mbox{where}\quad t^* =\underset{1\leq m\leq M}{\rm arg\,min}|z-\gamma(t_m)|.$$
 
As is well known~\cite{trefethen2014exponentially} the trapezoidal quadrature rule~\eqref{eq:trap} yields exponential convergence when applied to analytic integrands while it yields superalgebraic convergence for $C^\infty$ integrands~\cite{davis2007methods}.  Another significant advantage of using the trapezoidal rule in this context, is that the derivatives $\phi^{(n)}(t^*)=(\varphi\circ\gamma)^{(n)}(t^*)$ and $\gamma^{(n)}(t^*)$ for $n=1,\ldots N,$ at the interpolation/quadrature point $t^*$---which are required in the construction of the density interpolant $P_N$---can be accurately and efficiently computed from $\{\phi(t_m)\}_{m=1}^{M}$ and $\{\gamma(t_m)\}_{m=1}^{M}$ by means of FFT-based differentiation~\cite{johnson2011notes}. A more detailed algorithmic description of an efficient numerical procedure for constructing $P_N$ is presented in Section~\ref{sec:det_coef} below.

\subsection{Piecewise smooth contours} \label{sec:psmooth}
Consider now a piecewise smooth Jordan curve given by the union $\Gamma=\bigcup_{p=1}^P \Gamma_p$ of $P\geq1$ non-overlapping patches $\Gamma_p$, $p=1,\ldots, P$, of class $C^N$.  Letting  $\gamma_p:[-1,1]\to \Gamma_p$ denote  the parametrizations of the patches, the contour integral~\eqref{eq:model_int} is here approximated as
\begin{subequations}\label{eq:fejer_rule}\begin{equation}\label{eq:fejer}
\int_\Gamma f(\zeta)\de \zeta = \sum_{p=1}^P \int_{\Gamma_p}f(\zeta)\de \zeta =\sum_{p=1}^P\int_{-1}^1 f(\gamma_p(t))\gamma_p'(t)\de t\approx\sum_{p=1}^P\sum_{m=1}^M\omega_mf(\gamma_p(t_m))\gamma'_p(t_m),
\end{equation}
by applying the Fej\'er quadrature rule~\cite{davis2007methods} with nodes given by the Chebyshev zero points~\cite{vetterling1992numerical}:
\begin{equation}\label{eq:Cheb_points}
t_{m}:=\cos \left(\vartheta_{m}\right), \quad \vartheta_{m}:=\frac{(2 m-1) \pi}{2 M}, \quad m=1, \ldots, M,
\end{equation}
and  weights given by:
\begin{equation}\label{eq:Fweights}
\omega_{m}:=\frac{2}{M}\left(1-2 \sum_{\ell=1}^{[M / 2]} \frac{1}{4 \ell^{2}-1} \cos \left(2 \ell \vartheta_{m}\right)\right), \quad m=1, \ldots, M.\end{equation}\end{subequations}
The Fej\'er weights~\eqref{eq:Fweights} can be efficiently computed via the FFT~\cite{waldvogel2006fast}. As the trapezoidal quadrature rule~\eqref{eq:trap_rule}, the F\'ejer rule~\eqref{eq:fejer_rule} yields high-order accuracy for integration of smooth functions~\cite{davis2007methods}. Furthermore, the derivatives of the density and the parametrization at the interpolation point
$$z^*=\gamma_{p^*}(t^*)\quad\mbox{where}\quad t^* =\underset{1\leq m\leq M,1\leq p\leq P}{\rm arg\,min} |z-\gamma_p(t_m)|,$$ 
which are needed in the construction of the interpolant $P_N$, can be obtained from $\{\varphi(\gamma_{p^*}(t_m))\}_{j=1}^{M}$ and $\{\gamma_{p^*}(t_m)\}_{m=1}^{M}$, respectively,  via FFT-based differentiation algorithms~\cite{johnson2011notes}.  Note that since there are no quadrature points~\eqref{eq:Cheb_points} at the ends of the interval $[-1,1]$, the curve regularity requirements for the construction of $P_N$ are fulfilled at all the discretization points $\{\gamma_p(t_m)\}_{m=1}^M$ for $p=1,\ldots, P.$

As is well-known the accuracy of the numerically approximated derivatives deteriorates significantly as both the number~$M$ of Chebyshev points and the derivative order $N$ increases. In general, this phenomenon occurs because numerical differentiation is an ill-posed problem in the sense that small errors in the functions values, such as those stemming from round-off errors, give rise to large errors in the approximate derivatives. In order to circumvent this issue (which is also present in the context of the trapezoidal rule~\eqref{eq:trap_rule}), we divide the parameter space $[-1,1]$ into a suitable number of subintervals thus generating  smaller patches within which just a small number~$M$ of Chebyshev points  is needed to accurately perform both integration and differentiation. This strategy ensures that numerical differentiation of $\varphi\circ\gamma_{p^*}$ and $\gamma_{p^*}$ is carried out by differentiation of low-degree Chebyshev interpolation polynomials, which are significantly less affected by ill-conditioning~\cite{bruno2012numerical}.

\subsection{Determining the coefficients}\label{sec:det_coef}
This section presents a straightforward numerical procedure for computing the set of coefficients $\{c_j(z)\}_{j=0}^N$---that define the  density interpolant $P_N(\cdot,z)$ in~\eqref{eq:truc_taylor}---at points~$z\in\Gamma$ corresponding to the quadrature nodes introduced in Sections~\ref{sec:smooth} and~\ref{sec:psmooth}. 

As discussed, the discrete set $\{\phi(t_m)\}_{m=1}^{M}$, where $\{t_m\}_{m=1}^M$ could be either~\eqref{eq:uniform}  or~\eqref{eq:Cheb_points}, can be utilized to produce spectrally accurate approximations of the derivative values $\{\phi'(t_m)\}_{m=1}^{M}$ of a smooth function $\phi$ defined in the parameter space, by means of FFT-based differentiation. Let then $D_{\rm  FFT}:\C^{M}\to \C^M$ denote the linear transformation that produces  $(D_{\rm FFT}\pmb{\phi})_m \approx \phi'(t_m)$, $m=1,\ldots,M$, via FFT-based differentiation, where $\pmb{\phi} = [\phi(t_1),\ldots,\phi(t_M)]^T\in\C^M$.  Then the set of coefficients $\{c_j(\gamma(t_m))\}_{j=0}^N$ associated with each one of the quadrature nodes $t_m$, $m=1,\ldots M$, can  be   computed all at once by means of the following recursive procedure:\bigskip

\begin{algorithm}[H]
 \KwData{Sample values of the density function $\pmb{\phi}=[\phi(t_1),\ldots,\phi(t_M)]^T\in\C^M$;  $\pmb{\gamma}=[\gamma(t_1),\ldots,\gamma(t_M)]^{T}\in\C^M$; and density interpolation order $N\geq 0$\;}
 \KwResult{Matrix  $B\in\C^{M\times (N+1)}$ with entries $B(m,j+1)\approx c_{j}(\gamma(t_m))$, $m=1,\ldots M$, $j=0,\ldots,N$\;}
  \BlankLine
 $\pmb{\gamma}' = D_{\rm FFT}(\pmb{\gamma})$\; 
$B(:,1)=\pmb{\phi}$\; 
  \lIf{$N=0$}{\Return{B}}
 \For{$j$ from 1 to $N$}{
$ B(:,j+1)=D_{\rm FFT}\lf(B(:,j)\rg)\oslash\pmb{\gamma}'$\; \tcp{the symbol $\oslash$ denotes element-wise division}}
\Return{$B$}\;
 \caption{Evaluation of the coefficients in the definition of the density interpolant~\eqref{eq:truc_taylor}.}\label{alg:algo}
\end{algorithm}\bigskip

This simple procedure is based on the identity $P_N(z,z_m) = p_N(\gamma^{-1}(z),t_m)$, where $\gamma$ could be either a global or a local parametrization of the curve $\Gamma$, and $z_m=\gamma(t_m)$. Differentiating this identity $j$-times with respect to $z$, we get
$$
\lf.\frac{\p^j }{\p z^j}P_N(z,z_m)\rg|_{z=z_m} = \lf.D^j_\gamma p_N(t,t_m)\rg|_{t=t_m},\quad\mbox{where}\quad D_\gamma=\frac{1}{\gamma'(t)}\frac{\p}{\p t}.
$$
Therefore, using the identities
$$c_j(z_m)=\left.\frac{\p^j}{\p z^j}P_N(z,z_0)\right|_{z=z_m}\andtext \lf.D^j_\gamma p_N(t,t_m)\rg|_{t=t_m}=D^j_\gamma\phi(t_m),$$
that follow from~\eqref{eq:truc_taylor} and~\eqref{eq:inter_cond}, respectively, we  obtain  
$$
c_j(z_m) = D^j_\gamma\phi(t_m), \quad j=0,\ldots,N,\quad m=1,\ldots,M,
$$
which is the formula that Algorithm~\ref{alg:algo} implements.

Finally, we comment on the computational complexity of Algorithm~\ref{alg:algo}. Clearly,  the overall cost of constructing the $N$th-order density interpolants associated to each one of the discretization points on the contour, amounts to $O(NM\log (M))$ in the case a smooth curves, where $M$ is the total number of discretization points, and it amounts to $O(PNM\log(M))$ in the case of piecewise smooth curves, where~$P$ is the number of patches and~$M$ is number discretization points per patch.

\section{Examples} \label{sec:examples}
This final section presents a variety of numerical examples designed to validate and showcase the applicability of the proposed methodology.

\subsection{Validation} 
We start off by considering the smooth  ``jellyfish''  and the piecewise smooth ``snowflake'' contours displayed in Figures~\ref{fig:jellyfish} and~\ref{fig:snowflake}, respectively.   The jellyfish contour is given by the (real) analytic  parametrization 
\begin{equation}\label{eq:curve_param}
\gamma(t)=\{1+0.3\cos(4t+2\sin t)\}\e^{i(t-\frac{\pi}{2})},\quad t\in[0,2\pi),
\end{equation} while the snowflake curve corresponds to the Koch polygonal domain~\cite{falconer2004fractal} comprising~192 vertices. Here we validate the density interpolation technique developed in Section~\ref{sec:nearly_singular} and the high-order numerical methods presented in Section~\ref{sec:numerics} by using the Cauchy integral formula~\eqref{eq:CF}

In our first example, the Cauchy integral operator~$\mathcal Cf$ and its first two derivatives are evaluated at a set of points $\{w_k\}_{k=1}^{100}\in\Omega$ which are uniformly distributed along $\Gamma$ and placed at a distance~$10^{-4}$ or shorter from it.  Meromorphic  functions of the form~$f(z) = \sum_{\ell =1}^L (z-z_\ell)^{-1}$, with poles lying outside the domain~$\Omega$ bounded by~$\Gamma$ (the location of the poles $z_\ell$ of~$f$ used in this example are marked by the green dots in Figures~\ref{fig:jellyfish} and~\ref{fig:snowflake}), are used as input densities.  
The numerical errors in the evaluation of the operators~\eqref{eq:reg_formula} and~\eqref{eq:reg_formula_der} are then assessed by comparing the numerically produced values of $(\mathcal Cf)(w_k)$, $(\mathcal Cf)'(w_k)$, and $(\mathcal Cf)''(w_k)$, with the corresponding exact ones~$f(w_k)$, $f'(w_k)$ and $f''(w_k)$, for $k=1,\ldots, 100$. 
Table~\ref{tab:1} shows the results corresponding to the maximum relative errors achieved without regularization and using the density interpolation method of orders $N=0,\ldots, 5$. A total of $M=800$ quadrature nodes were used in the case of the jellyfish contour which is discretized applying the numerical integration/differentiation approach presented in Section~\ref{sec:smooth}. The snowflake curve, in turn,  was discretized using the approach presented in Section~\ref{sec:psmooth} using a total of $P=576$ patches with~$M=8$ quadrature nodes per patch.   

Figures~\ref{fig:jellyfish} and~\ref{fig:snowflake}, on the other hand, display the logarithm in base ten of the absolute error~$|f(z)-\tilde f(z)|$ for $z\in\Omega$, in the evaluation of the Cauchy operator for various density interpolation orders. The ``$5h$" rule of thumb~\cite{Barnett:2014tq} was used in this example, where the regularization was applied at  points $z\in\Omega$ at a distance smaller than $5h$ from the contour where $h$ is an approximation of   distance between the contour discretization points that are the closest to~$z$. A total of $M=400$ quadrature nodes were used in the case of the jellyfish contour, and $P=576$ patches and $M=6$ points per patch in the case of the snowflake curve.


\begin{table}
\caption{Relative errors $E_n = \max_{1\leq k\leq 100}|f^{(n)}(w_k)-\tilde f^{(n)}(w_k)|/|f^{(n)}(w_k)|$, with~$f^{(n)}(w_k)$ and $\tilde f^{(n)}(w_k)$ denoting the exact and approximate values, respectively, in the evaluation of the Cauchy integral operator ($n=0$) and its first ($n=1$)  and second ($n=2$) order derivatives, at a fixed set of points $\{w_k\}_{k=1}^{100}\in\Omega$ placed at a distance~$10^{-4}$ or shorter from the contour~$\Gamma$. The table reports the results obtained without using regularization and using the density interpolation method of order $N=0,\ldots,4$, for the (smooth) jellyfish and the (polygonal) snowflake contours displayed in Figures~\ref{fig:jellyfish} and~\ref{fig:snowflake}, respectively. }
\begin{center}
\begin{tabular}{ c|c|c|c|c|c|c } 
Reg. order & \multicolumn{3}{|c}{Errors (jellyfish)} & \multicolumn{3}{|c}{Errors (snowflake)} \\ 
\toprule
$N$& $E_0$&$ E_1$&$ E_2$& $E_0$&$ E_1$&$ E_2$\\
 \toprule
without & $2.48\cdot 10^{+01}$ & $3.98\cdot 10^{+04}$&$1.11\cdot 10^{+08}$& $3.24\cdot 10^{+00}$&$3.05\cdot 10^{+03}$&$2.75\cdot 10^{+06}$\\ 
 0th & $1.60\cdot 10^{-02}$ & $5.04\cdot 10^{-01}$ &$1.35\cdot 10^{+01}$&$4.02\cdot 10^{-03}$&$4.60\cdot 10^{-01}$&$9.91\cdot 10^{+02}$\\ 
 1st  & $5.72\cdot 10^{-06}$ & $8.79\cdot 10^{-03}$ &$5.02\cdot 10^{-01}$&$4.47\cdot 10^{-06}$&$3.18\cdot 10^{-03}$&$4.38\cdot 10^{-01}$\\ 
 2nd  & $2.43\cdot 10^{-09}$ & $7.57\cdot 10^{-06}$&$1.03\cdot 10^{-02}$&$5.20\cdot 10^{-09}$&$7.98\cdot 10^{-06}$&$2.81\cdot 10^{-03}$ \\ 
 3rd  & $1.04\cdot 10^{-12}$ & $4.55\cdot 10^{-09}$ &$1.27\cdot 10^{-05}$&$7.67\cdot 10^{-12}$&$1.43\cdot 10^{-08}$&$1.14\cdot 10^{-05}$\\ 
 4th  & $3.54\cdot 10^{-13}$ & $1.46\cdot 10^{-10}$ &$2.10\cdot 10^{-07}$&$3.52\cdot 10^{-12}$&$2.68\cdot 10^{-10}$&$4.27\cdot 10^{-08}$\\ 
\bottomrule
\end{tabular}
\end{center}\label{tab:1}
\end{table}

\begin{figure}[ht]
\centering	
\subfloat[Without regularization. $E=1.27\cdot 10^1$.]{\includegraphics[height=0.3\textwidth]{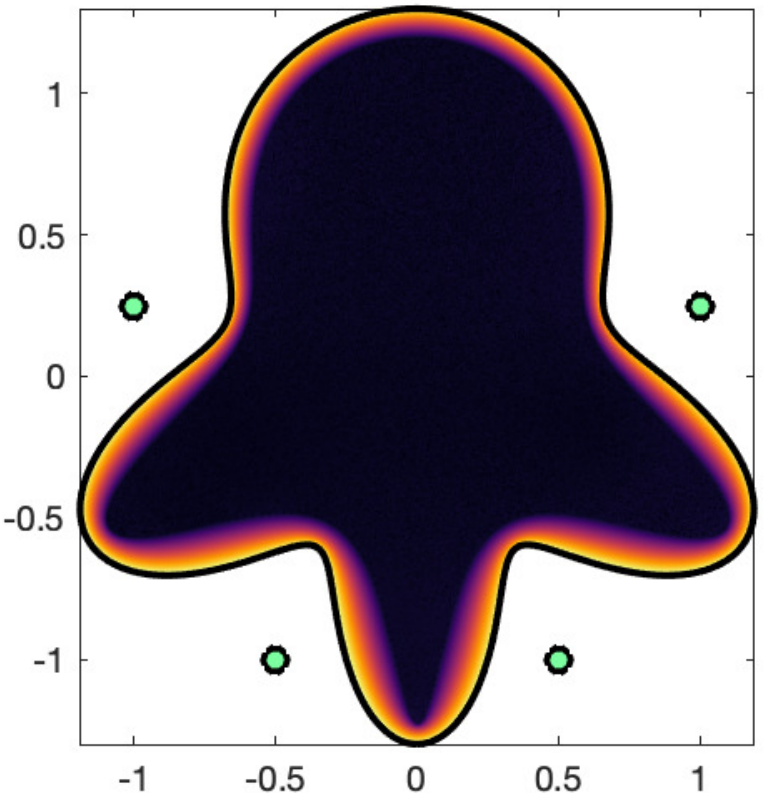}}\quad
\subfloat[$0$th order.  $E=1.18\cdot 10^{-1}$.]{\includegraphics[height=0.3\textwidth]{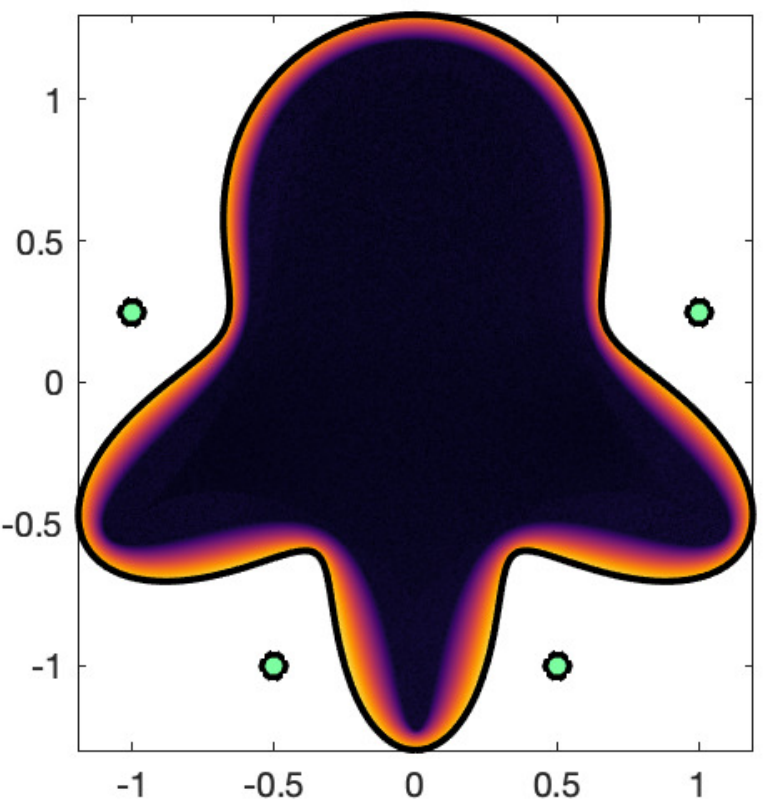}}\quad
\subfloat[ $1$st order. $E=6.40\cdot 10^{-3}$.]{\includegraphics[height=0.3\textwidth]{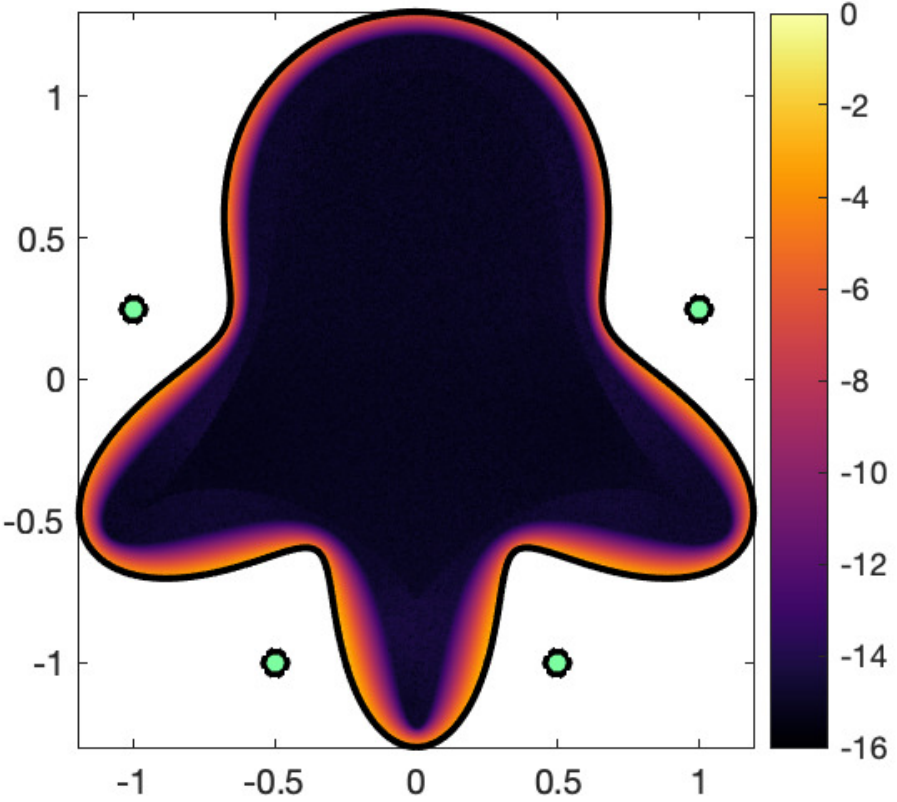}}\\
 \subfloat[$2$nd order. $E=3.85\cdot 10^{-4}$.]{\includegraphics[height=0.3\textwidth]{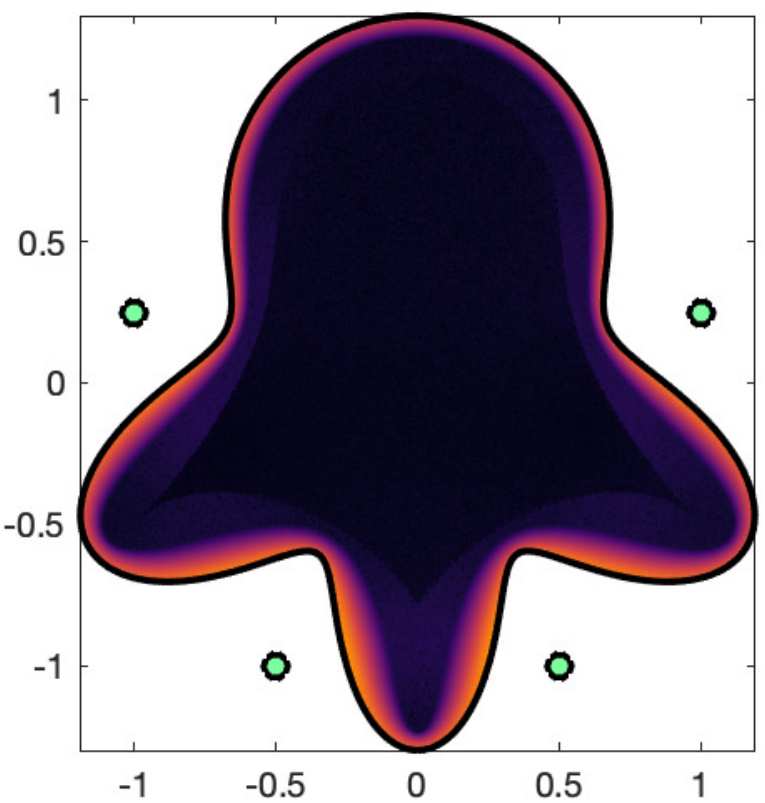}}\quad
 \subfloat[$3$rd order. $E=2.27\cdot 10^{-5}$.]{\includegraphics[height=0.3\textwidth]{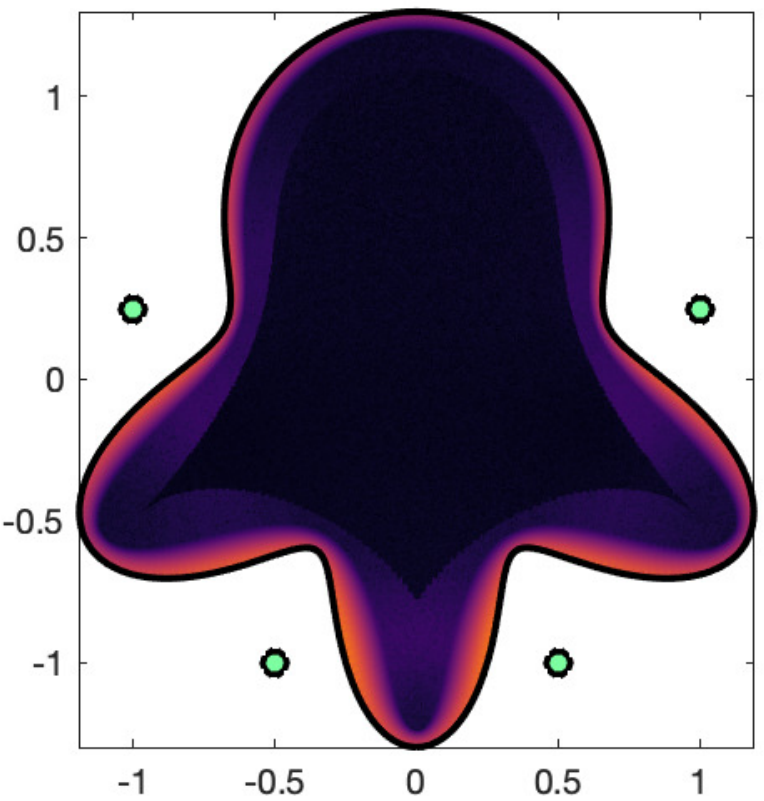}}\quad
 \subfloat[a][$4$th order. $E=1.38\cdot 10^{-6}$.]{\includegraphics[height=0.3\textwidth]{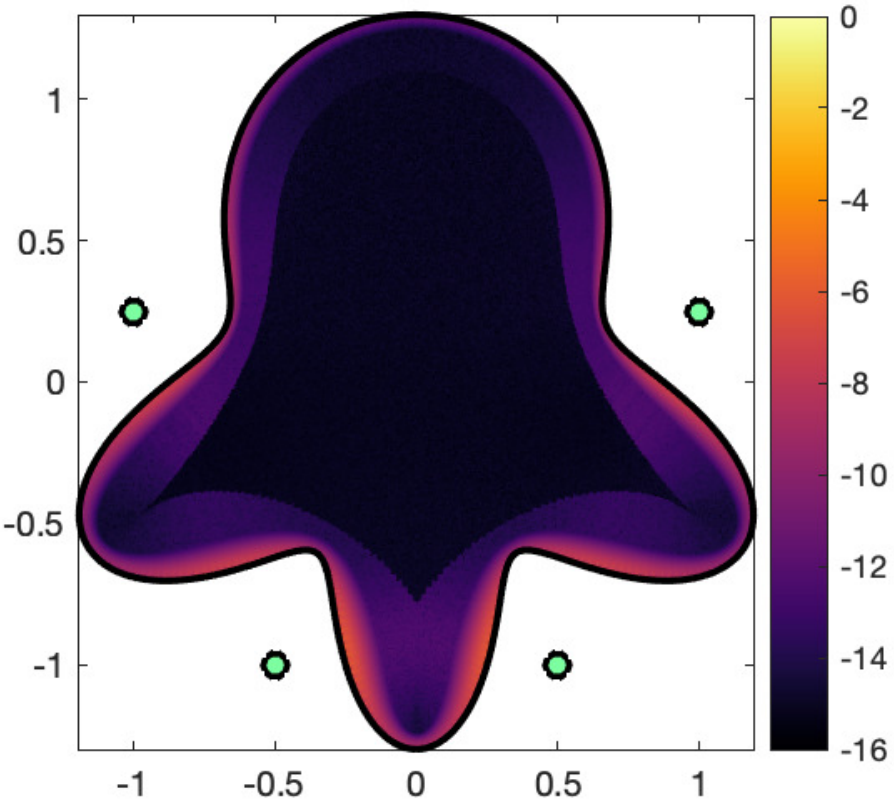}}
 \caption{Logarithm in base ten of the absolute error in the
   evaluation of the Cauchy operator inside a jellyfish smooth contour for density interpolation orders $N=0,1,2,3$ and~$4$. The input function used corresponds to the contour restriction of a meromorphic function with poles at the locations marked by the green dots. The maximum absolute error, $E$, is provided  in the captions.}\label{fig:jellyfish}
\end{figure}

\begin{figure}[ht]
\centering	
\subfloat[Without regularization. $E=5.45\cdot 10^1$.]{\includegraphics[height=0.3\textwidth]{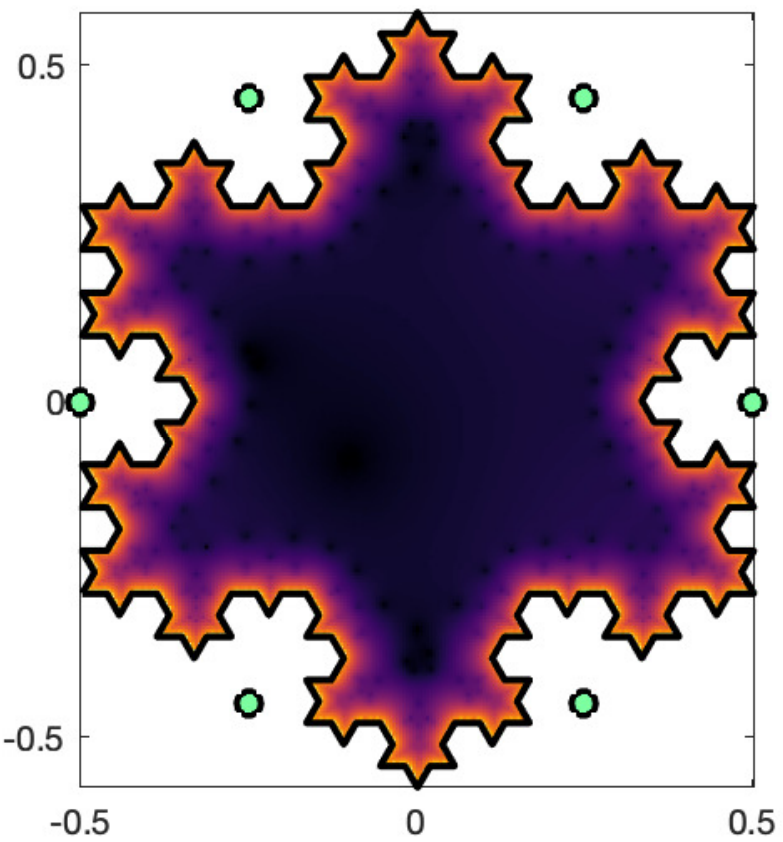}\label{fig_D_none}}\quad
\subfloat[$N=0$.  $E=6.00\cdot 10^{-2}$.]{\includegraphics[height=0.3\textwidth]{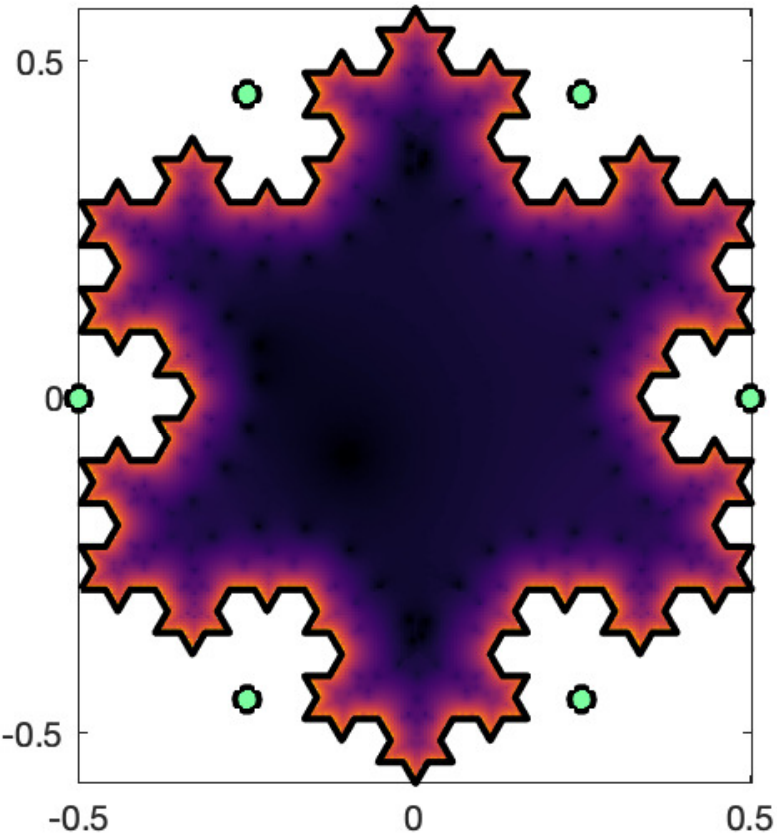}\label{fig_D_0}}\quad
\subfloat[$N=1$. $E=5.18\cdot 10^{-4}$.]{\includegraphics[height=0.3\textwidth]{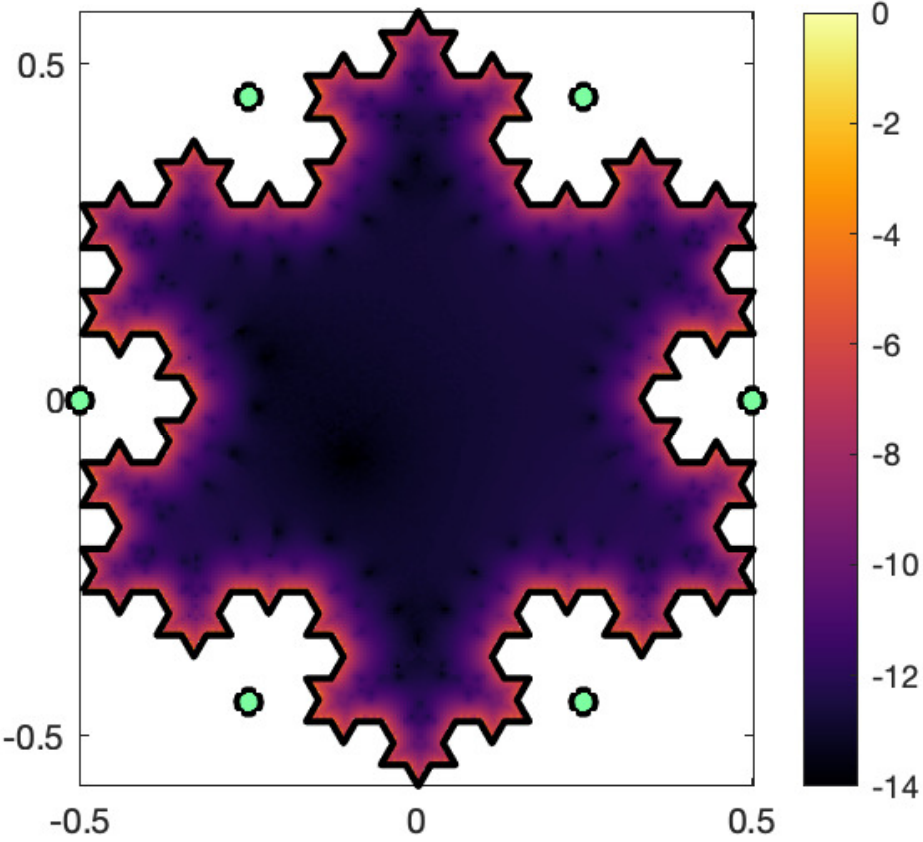}\label{fig_D_4}}\\
 \subfloat[$N=2$. $E=1.03\cdot 10^{-5}$.]{\includegraphics[height=0.3\textwidth]{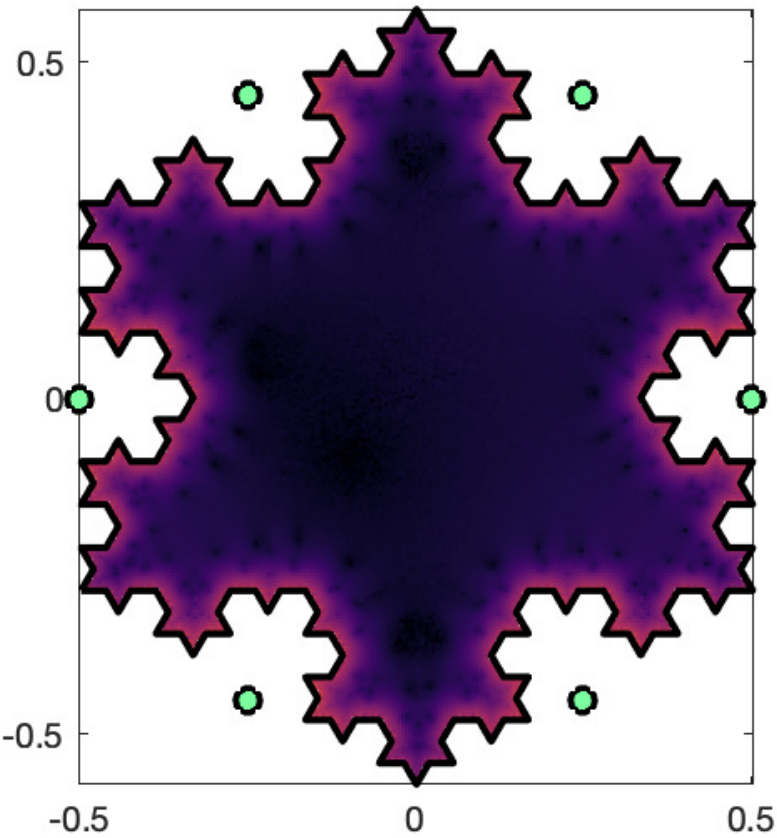}\label{fig_grad_D_none}}\quad
 \subfloat[$N=3$. $E=2.43\cdot 10^{-7}$.]{\includegraphics[height=0.3\textwidth]{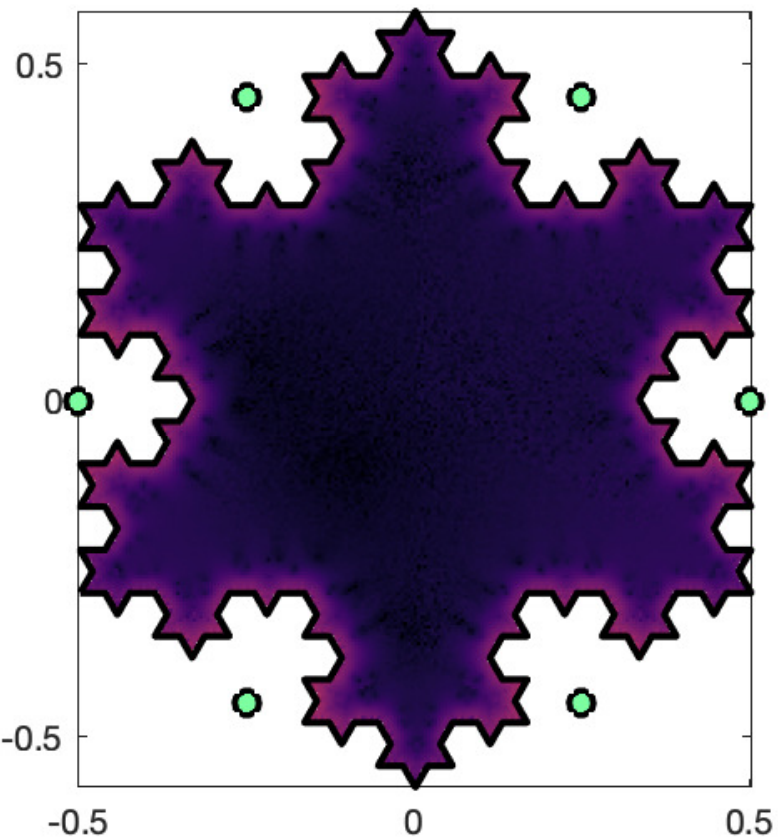}\label{fig_grad_D_0}}\quad
 \subfloat[a][$N=4$. $\max E=1.37\cdot 10^{-8}$.]{\includegraphics[height=0.3\textwidth]{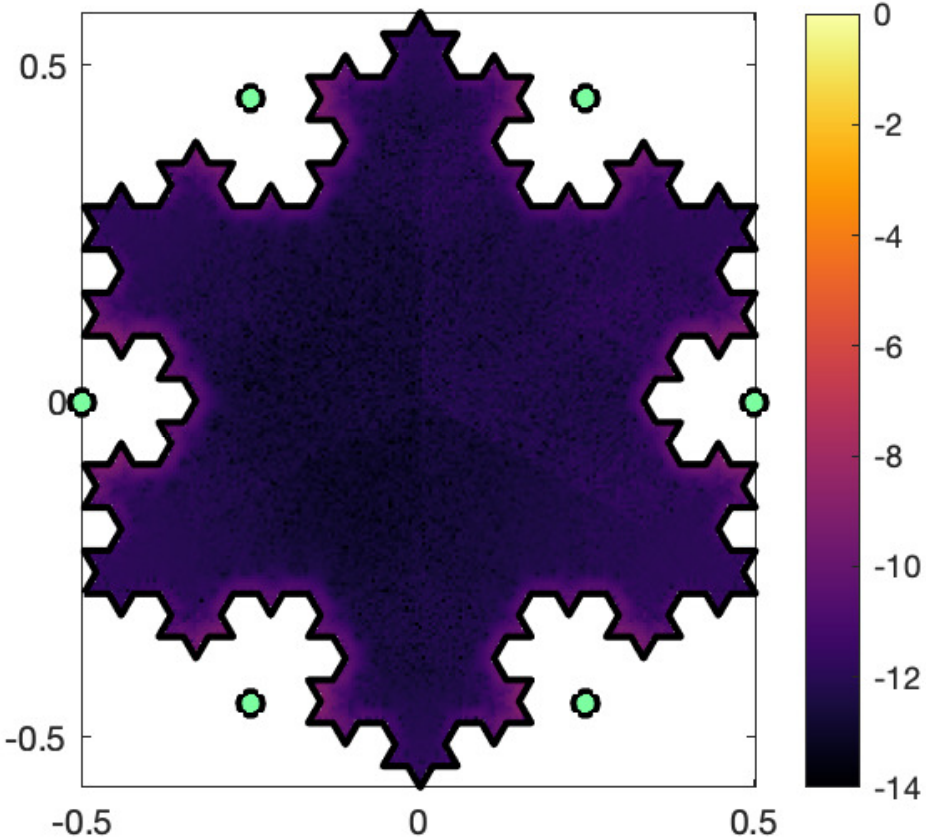}\label{fig_grad_D_4}}
 \caption{Logarithm in base ten of the absolute error in the
   evaluation of the Cauchy operator inside the Koch polygonal curve for density interpolation orders $N=0,1,2,3$ and~$4$. The input function used corresponds to the contour restriction of a meromorphic function with poles at the locations marked by the green dots. The maximum absolute error, $E$, is provided in the captions. }\label{fig:snowflake}
\end{figure}  

\begin{figure}[ht]
\centering	
\subfloat[Integral equation~\eqref{eq:IE_SL}.]{\includegraphics[height=0.4\textwidth]{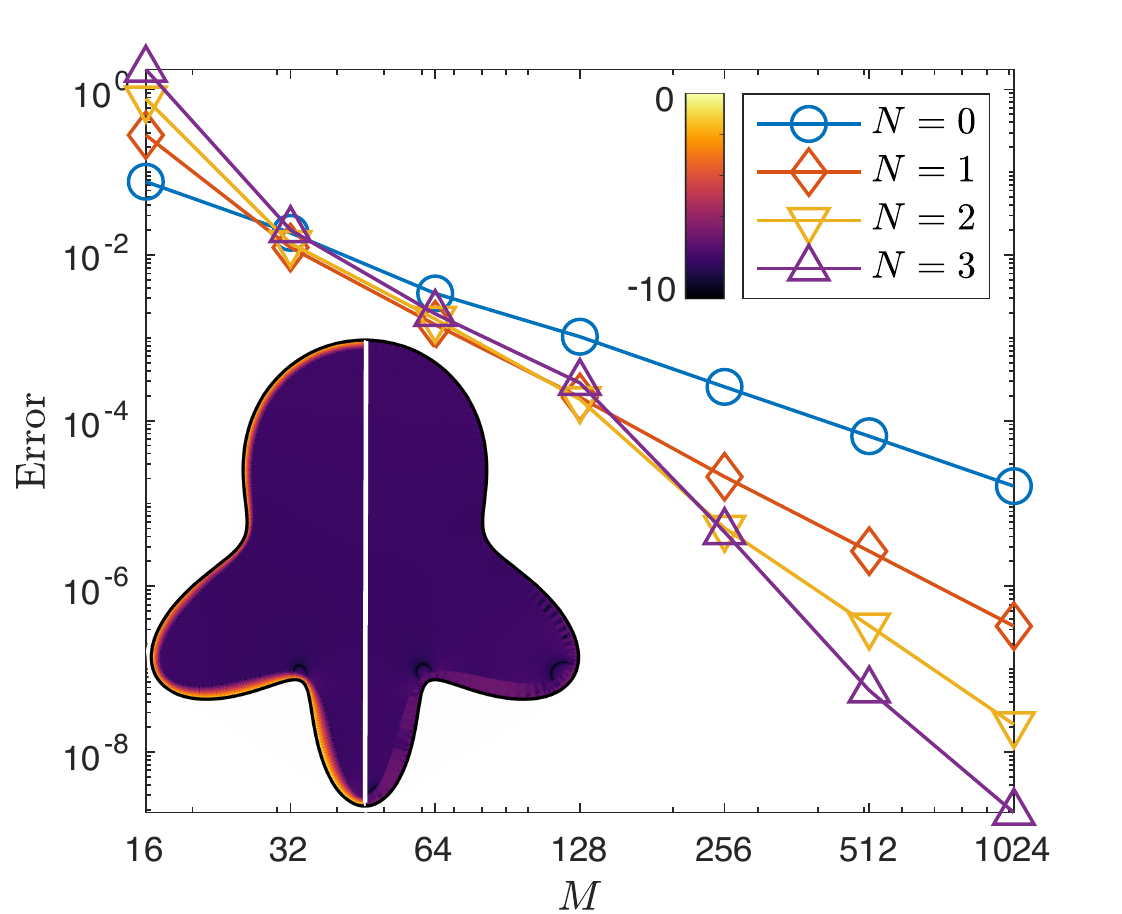}\label{fig_Sop}}
\subfloat[Integral equation~\eqref{eq:IE_HS}.]{\includegraphics[height=0.4\textwidth]{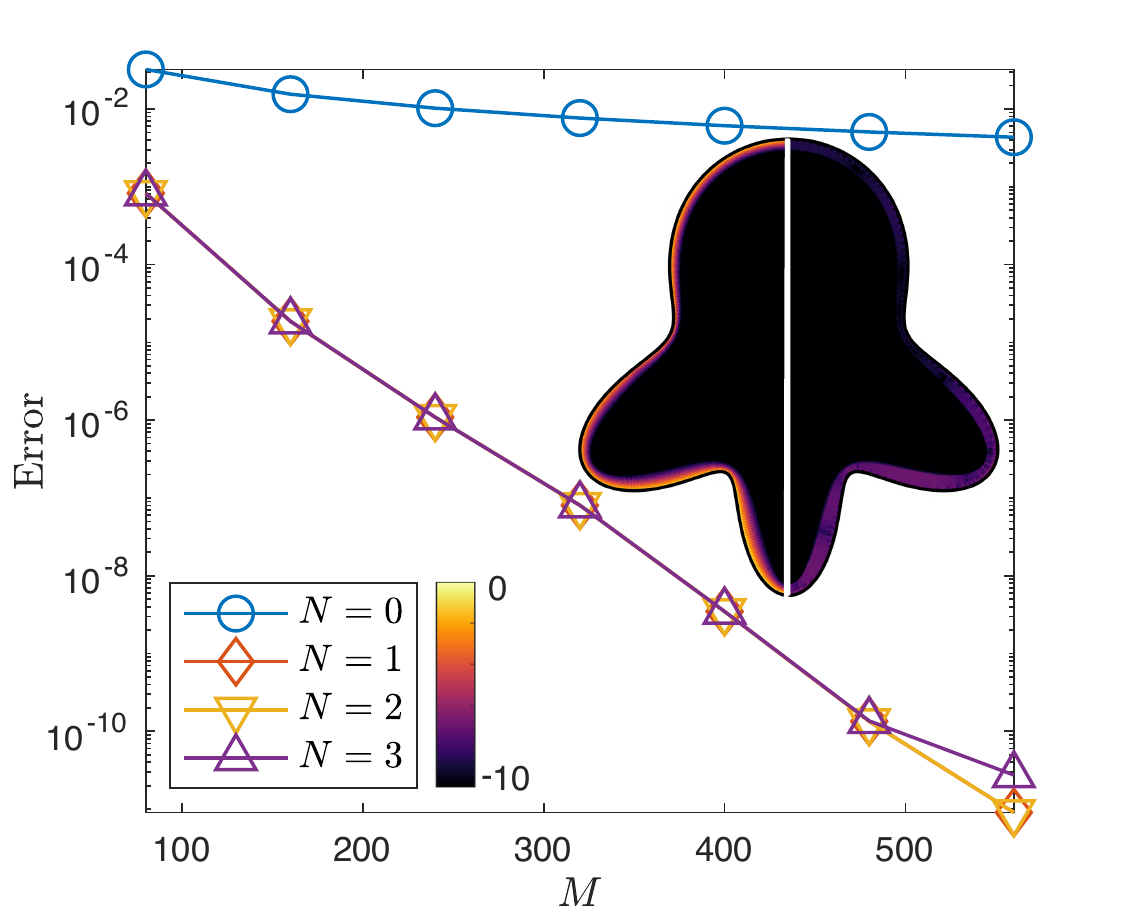}\label{fig_Top}}
 \caption{Relative errors in the solution of the integral equations~\eqref{eq:IE_SL}  and~\eqref{eq:IE_HS}, corresponding to the plots  (a) in log-log scale and  (b) in semi-log scale, respectively, computed using the trapezoidal rule based Nystr\"om method for various numbers~$M$ of discretization points. The inset figures display the logarithm in base ten of of the absolute error in the numerically approximated solution $u$ inside the domain, without using regularization (left half of the inset figure) and using the density interpolation of order $N=3$ (right half of the inset figure). }\label{fig:IE_convergence}
\end{figure}

\subsection{Laplace equation layer potentials and boundary integral operators}
Following the discussion in Section~\ref{sec:DL_formulation}, we here apply the density interpolation method to solve the Laplace equation by means of boundary integral equation methods. 

For the sake of completeness, we consider the uniquely solvable interior Robin  problem:
\begin{equation}\label{eq:interior_Laplace}
\Delta u =0\quad\mbox{in}\quad\Omega,\qquad \frac{\p u}{\p \nu} + u = f\quad\mbox{on}\quad\Gamma,
\end{equation}
which is formulated as direct boundary integral equations involving the two layer potentials and all four integral operators of Calder\'on calculus. The domain's boundary $\Gamma=\p\Omega$ is assumed of class~$C^2$ and $f\in C^{1,\alpha}(\Gamma)$, $0<\alpha<1$. Integral equations for  the unknown traces $\p u/\p \nu$ and $u$ on~$\Gamma$ are derived from the Green's representation formula
\begin{equation}\label{eq:GF}
u(\ner) = \lf(\mathcal S\frac{\p u}{\p \nu}\rg)(\ner) - (\mathcal D u)(\ner),\quad \ner\in\Omega.
\end{equation}
Indeed, evaluating this formula 
on $\Gamma$---by making use of the interior jump conditions for the potentials~\cite{kress2012linear} and employing the boundary condition---we obtain the following second-kind integral equation for the unknown normal derivative of the solution:
\begin{equation}\label{eq:IE_SL}
\lf(\frac{I}{2}+K+S\rg)\frac{\p u}{\p \nu} = \lf(\frac{I}{2}+K\rg)f\quad\mbox{on}\quad\Gamma,
\end{equation}
where $I$, $S$ and $K$ are respectively the identity operator, the single-layer operator~\eqref{eq:SL_op}, and the double-layer operator~\eqref{eq:DL_op}.
In a similar manner, taking  the normal derivative of the Green's formula on~$\Gamma$---by making use of the exterior jump conditions of the layer potentials gradients~\cite{kress2012linear}---and enforcing the boundary condition, we obtain
\begin{equation}\label{eq:IE_HS}
\lf(-\frac{I}{2}+K^\top+T\rg)u = \lf(-\frac{I}{2}+K^\top\rg)f\quad\mbox{on}\quad\Gamma,
\end{equation}
for the unknown solution $u$ on $\Gamma$, where $K^\top$ and $T$ are respectively the adjoint double-layer operator~\eqref{eq:ADL_op} and the hypersingular operator~\eqref{eq:hypersingular}.

The integral equations~\eqref{eq:IE_SL} and~\eqref{eq:IE_HS} are  discretized following a Nystr\"om method~\cite{kress2012linear} based on direct use of the trapezoidal-rule discretization described in  Section~\ref{sec:smooth}. The required density interpolants are constructed following the procedures outlined in Sections~\ref{sec:smooth} and~\ref{sec:det_coef}. Since the single-layer ($S$) and hypersingular ($T$) operators are recast, in~\eqref{eq:SL_op_reg} and~\eqref{eq:hyper_complex}, respectively, in terms of smooth integrands, the trapezoidal rule yields the expected order of convergence  according to the achieved smoothness of the integrands, which, in turn, depends on the density interpolation order $N$. Note that the remaining double-layer ($K$) operator and its adjoint ($K^\top$) do not need regularization in this case. The resulting linear systems for the approximate values $v_j$ and $u_j$ of the traces $\p u/\p\nu$ and $u$ at the quadrature nodes $\nex_j\in\Gamma$, $j=1,\ldots,$ are iteratively solved by means of GMRES~\cite{saad1986gmres}, which only requires forward map evaluations of the integral operators.

In order to examine the accuracy of the Nystr\"om method, we let $\Gamma$ be the smooth jellyfish curve given by the parametrization in~\eqref{eq:curve_param}, and $f$ be so that $u(x,y)=\e^x\sin y$ is the exact solution of~\eqref{eq:interior_Laplace}. Figure~\ref{fig_Sop} displays the relative errors $\max_{1\leq j\leq M}|v_j-\frac{\p u}{\p \nu}(\nex_j)|/\max_{1\leq j\leq M}|\frac{\p u}{\p \nu}(\nex_j)|$ in the numerical solution of the second-kind integral equation~\eqref{eq:IE_SL}. Numerical errors of order $O(M^{-N-2})$ as $M\to \infty$, for interpolation orders $N=0,\ldots, 3,$, are observed in these examples. The convergence appears to be slightly delayed due to the significant curvature of the jellyfish contour used, which requires a relatively large number of discretization points $M$ to be properly resolved. The observed convergence orders are explained by the fact that approximate second-kind integral equation solutions obtained by means of Nystr\"om methods, inherit the accuracy of the associated quadrature rule~\cite{atkinson1997numerical,hackbusch1995}. Therefore, since the dominant quadrature error in the approximation of $(I/2+K+S)\varphi$ at the nodes, stems from the evaluation of the regularized single-layer operator $S$ in~\eqref{eq:SL_op_reg} (since $K$ features an analytic kernel)
and, as we show bellow,  the direct trapezoidal rule approximation~\eqref{eq:trap_rule_total} of $S\varphi$ yields $O(M^{-N-2})$ errors as $M\to\infty$ for analytic contours and densities, we achieve the same asymptotic errors in the integral equation solution. 

The following results establish the abovementioned  asymptotic error bound for the trapezoidal rule approximation of the regularized single-layer operator $S$:

\begin{lemma} \label{lem:trap_rule}Let $g$ be an analytic function on $[0,2\pi]$  such that $g^{(n)}(0)=g^{(n)}(2\pi)=0$ for $n=0,\ldots N$. Then
\begin{eqnarray}
\left|\int_{0}^{2\pi}g(t)\de t-\frac{2\pi}{M}\sum_{m=1}^{M}g(t_m)\right|\leq O\left(M^{-N-2}\right),\label{eq:trap_smooth}\\
\left|\int_{0}^{2\pi}g(t)\log\left(4\sin^2\frac{t}{2}\right)\de t-\frac{2\pi}{M}\sum_{m=2}^{M}g(t_m)\log\left(4\sin^2\frac{t_m}{2}\right)\right|\leq  O\left(M^{-N-2}\right),\label{eq:trap_log}
\end{eqnarray}
as $M\to\infty$, where $t_m=\frac{2\pi}{M}(m-1)$ for $m=1,\ldots,M.$
\end{lemma}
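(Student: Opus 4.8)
The plan is to establish both estimates via the Euler--Maclaurin formula, exploiting crucially the vanishing of $g$ and its first $N$ derivatives at the endpoints. For~\eqref{eq:trap_smooth}, I would apply the classical Euler--Maclaurin summation formula to $g$ on $[0,2\pi]$ with the uniform nodes $t_m=\frac{2\pi}{M}(m-1)$. The error of the trapezoidal rule is then a sum of boundary terms of the form $\frac{B_{2k}}{(2k)!}\left(\frac{2\pi}{M}\right)^{2k}\bigl[g^{(2k-1)}(2\pi)-g^{(2k-1)}(0)\bigr]$ for $k=1,2,\ldots$, plus a remainder integral. Since $g^{(n)}(0)=g^{(n)}(2\pi)=0$ for $n=0,\ldots,N$, all boundary contributions with $2k-1\le N$ vanish identically; the first surviving boundary term is of order $\left(\tfrac{2\pi}{M}\right)^{2\lceil (N+1)/2\rceil}$, which is $O(M^{-N-1})$ when $N$ is odd and $O(M^{-N-2})$ when $N$ is even. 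To get the uniform bound $O(M^{-N-2})$ one has to be slightly more careful: one extra integration by parts in the Euler--Maclaurin remainder (using analyticity of $g$, hence boundedness of all derivatives on the compact interval) upgrades the odd case by one power, or alternatively one invokes the sharper statement that for a $C^\infty$ periodic-up-to-order-$N$ function the trapezoidal error decays like the first nonzero Fourier-type boundary term with the $B_{\mathrm{odd}}=0$ cancellation giving the extra order. I would present it as: Euler--Maclaurin $\Rightarrow$ error $=\sum_{k\ge 1}c_k M^{-2k}[g^{(2k-1)}]_0^{2\pi}+R_M$, vanishing of low-order boundary data kills everything through order $M^{-N-1}$, and the analyticity-based bound on $R_M$ (or one further integration by parts) yields $O(M^{-N-2})$.

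For~\eqref{eq:trap_log}, the integrand $g(t)\log\!\left(4\sin^2\tfrac t2\right)$ has a logarithmic singularity at $t=0$ (equivalently $t=2\pi$), so the plain Euler--Maclaurin formula does not apply directly; the relevant tool is the modified/product trapezoidal rule for integrands with a logarithmic endpoint singularity, whose error expansion (due to Navot, and used extensively in Kress's book~\cite{kress2012linear}) involves both the usual even powers $M^{-2k}$ paired with boundary derivatives of the smooth factor $g$, and additional terms of the form $M^{-k}\log M\cdot g^{(k-1)}(0)$ and $M^{-k}\,\zeta'(\cdot)\,g^{(k-1)}(0)$ coming from the $\log$. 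The key point is that every term in this expansion is a multiple of some $g^{(n)}(0)$ or $g^{(n)}(2\pi)$ with $n\le k-1$; hence the hypothesis $g^{(n)}(0)=g^{(n)}(2\pi)=0$ for $n=0,\ldots,N$ annihilates all contributions up through the term scaling like $M^{-N-1}\log M$, leaving an error dominated by the $M^{-(N+2)}$ (possibly times $\log M$) term. Since $M^{-N-2}\log M = O(M^{-N-1})$ is not good enough, I would absorb the logarithmic factor by noting $M^{-N-2}\log M\le O(M^{-N-2+\varepsilon})$ is still weaker than claimed — so instead I must use that the first surviving term is actually of order $M^{-N-2}$ without a log once the $n\le N$ coefficients vanish, because the $\log M$ enhancement attaches to $g^{(N)}(0)$-type coefficients which are zero, and the next genuinely present coefficient multiplies $g^{(N+1)}(0)$ with a clean $M^{-N-2}$ (no log). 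Stating the Navot expansion precisely and tracking which coefficient carries the $\log M$ is therefore the crux.

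The main obstacle, accordingly, is the bookkeeping in~\eqref{eq:trap_log}: one must cite or reproduce the asymptotic error expansion for the product trapezoidal rule applied to a $\log$-singular integrand and verify that the vanishing of $g,g',\ldots,g^{(N)}$ at the endpoints removes exactly the terms of size $M^{-1},\ldots,M^{-(N+1)}$ (logs included), leaving $O(M^{-N-2})$. For~\eqref{eq:trap_smooth} the only mild subtlety is the odd-$N$ parity issue, handled by one additional integration by parts in the Euler--Maclaurin remainder using analyticity of $g$. Both conclusions then follow by collecting the surviving leading term and bounding the tail of the expansion using uniform bounds on the derivatives of the analytic function $g$ on the compact interval $[0,2\pi]$.
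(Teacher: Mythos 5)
Your overall strategy coincides with the paper's: the paper disposes of \eqref{eq:trap_smooth} by citing \cite[Corollary 3.3]{javed2014trapezoidal} and of \eqref{eq:trap_log} by ``slightly modifying'' the Euler--Maclaurin expansion for logarithmically singular integrands of \cite[Theorem 1]{celorrio1999euler}; you propose to rederive exactly these Euler--Maclaurin/Navot-type expansions and let the vanishing endpoint data annihilate the low-order correction terms, which is the same mechanism.

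Two points in your write-up need repair. First, the parity analysis for \eqref{eq:trap_smooth} is backwards: the Euler--Maclaurin boundary terms involve only odd-order derivatives $g^{(2k-1)}$, and the hypothesis kills all derivative orders $0,\ldots,N$. For $N$ odd, the term of size $M^{-(N+1)}$ that you claim survives is proportional to $g^{(N)}(2\pi)-g^{(N)}(0)=0$; the first possibly nonzero boundary term then involves $g^{(N+2)}$ and is $O(M^{-N-3})$, while for $N$ even it involves $g^{(N+1)}$ and is $O(M^{-N-2})$. So no extra device is needed---and the device you propose (one further integration by parts in the remainder) could not have removed a genuinely surviving boundary term anyway; it only refines the remainder, which analyticity already lets you push to arbitrarily high order. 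Second, for \eqref{eq:trap_log} you correctly identify the crux---whether the first surviving term of the log-singular expansion carries a $\log M$ factor---but you only assert, rather than verify, that the $\log M$-bearing coefficients are exactly those annihilated by the hypothesis. In Navot-type expansions the $h^{s+1}$ and $h^{s+1}\log h$ contributions from the singular endpoint are both multiples of $g^{(s)}$ there, so at the first surviving order $s=N+1$ a $\log M$ factor is not automatically excluded; ruling it out (or otherwise absorbing it to reach the clean $O(M^{-N-2})$ bound) is precisely the bookkeeping the paper delegates to the modified proof of \cite[Theorem 1]{celorrio1999euler}, and your sketch does not actually carry it out.
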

\begin{proof}
The error bound~\eqref{eq:trap_smooth} follows directly from~\cite[Corollary 3.3]{javed2014trapezoidal}, while~\eqref{eq:trap_log}, in turn, follows by  slightly modifying  the proof of the Euler-Maclaurin expansion for functions with logarithmic singularities presented in~\cite[Theorem 1]{celorrio1999euler}.
\end{proof}
\begin{theorem}  Let $\gamma$ and $\varphi\circ\gamma$ be analytic and $2\pi$-periodic functions on $[0,2\pi]$, and let $\psi$, $Q_N$ and $\log(\cdot-z_0)$, with $z_0=\gamma(0)\in\Gamma$, be the functions defined in Theorem~\ref{th:single_layer}. Then, the error in the  direct trapezoidal rule approximation~\eqref{eq:trap_rule_total} of the regularized single-layer operator~\eqref{eq:SL_op_reg} at $\nex=(\real z_0,\imag z_0)$, satisfies:
$$
\left|(S\varphi)(\nex)+\real\lf\{\frac{1}{M}\sum_{m=2}^{M} \log(\gamma(t_m)-\gamma(0))\lf\{\psi(\gamma(t_m))-Q_N(\gamma(t_m),\gamma(0))\rg\}\gamma'(t_m)\rg\}\right|\leq O(M^{-N-2}),
$$
as $M\to\infty$.
\end{theorem}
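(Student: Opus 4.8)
The plan is to reduce the claim to a single‑variable quadrature estimate and then invoke Lemma~\ref{lem:trap_rule}. Write $z_0=\gamma(0)$ and $\nex=(\real z_0,\imag z_0)$, and abbreviate $h:=\psi\circ\gamma-Q_N(\gamma(\cdot),\gamma(0))$ and $H:=h\,\gamma'$. Since $\tfrac{1}{2\pi i}=\tfrac{-i}{2\pi}$, the regularized single‑layer formula~\eqref{eq:SL_op_reg} written in parametric form at $z=z_0$ reads $(S\varphi)(\nex)=-\tfrac{1}{2\pi}\real\{I\}$, where $I:=\int_0^{2\pi}\log(\gamma(t)-\gamma(0))\,H(t)\de t$. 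Let $\mathcal Q_M[I]:=\tfrac{2\pi}{M}\sum_{m=2}^{M}\log(\gamma(t_m)-\gamma(0))\,H(t_m)$ be the trapezoidal approximation of $I$ with the singular node $m=1$ removed. Using $\tfrac{1}{2\pi}\cdot\tfrac{2\pi}{M}=\tfrac1M$ and $H(t_m)=\{\psi(\gamma(t_m))-Q_N(\gamma(t_m),\gamma(0))\}\gamma'(t_m)$, one sees that the quantity added to $(S\varphi)(\nex)$ in the statement equals $\tfrac{1}{2\pi}\real\{\mathcal Q_M[I]\}$, so the bracketed expression there is $-\tfrac{1}{2\pi}\real\{I-\mathcal Q_M[I]\}$; since $|\real\{w\}|\le|w|$, the claim follows once I establish $|I-\mathcal Q_M[I]|\le O(M^{-N-2})$ as $M\to\infty$.

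The first ingredient is the structure of $h$. By hypothesis $\gamma$ and $\varphi\circ\gamma$ are analytic and $2\pi$‑periodic; hence so are $\psi\circ\gamma=(\varphi\circ\gamma)|\gamma'|/\gamma'$, the polynomial $t\mapsto Q_N(\gamma(t),\gamma(0))$, and therefore $h$ and $H$. Moreover, the interpolation conditions~\eqref{eq:inter_cond} defining the $\psi$‑interpolant $Q_N(\cdot,z_0)$ at $t_0=0$ give $h^{(n)}(0)=0$ for $n=0,\dots,N$, and periodicity yields $h^{(n)}(2\pi)=0$ for the same range. A Leibniz‑rule computation transfers this endpoint vanishing (up to order $N$) to $H=h\,\gamma'$, to $t\,H$ — here one uses only that multiplication by $t$ lowers the order of vanishing by at most one unit — and to $L\,H$ for any factor $L$ smooth at $0$ and $2\pi$. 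Thus $H$, $t\,H$ and $L\,H$ all meet the endpoint hypotheses of Lemma~\ref{lem:trap_rule}.

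The hard part is the splitting of the complex logarithm. From the elementary identity $i(1-e^{it})=2\sin(t/2)\,e^{it/2}$, whose right‑hand side has positive modulus $2\sin(t/2)$ and argument $t/2\in(0,\pi)$ for $t\in(0,2\pi)$, I would set
\[
L(t):=\log(\gamma(t)-\gamma(0))-\frac{it}{2}-\frac12\log\!\lf(4\sin^2\frac t2\rg),\qquad t\in(0,2\pi),
\]
so that $\log(\gamma(t)-\gamma(0))=L(t)+\tfrac{it}{2}+\tfrac12\log(4\sin^2\tfrac t2)$ pointwise on $(0,2\pi)$. Because the branch of $\log(\cdot-z_0)$ prescribed in Theorem~\ref{th:single_layer} has its cut meeting $\Gamma$ only at $z_0$, the map $t\mapsto\log(\gamma(t)-\gamma(0))$ is analytic on $(0,2\pi)$, so $L$ is well defined there. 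Writing $\gamma(t)-\gamma(0)=t\,g_1(t)$ and $4\sin^2(t/2)=t^2 g_3(t)$ with $g_1,g_3$ analytic near $0$, $g_1(0)=\gamma'(0)\neq0$, $g_3(0)=1$, the $\log t$ singularities cancel and $L$ extends analytically across $t=0$; the analogous computation handles $t=2\pi$. Since $L'(t)=\gamma'(t)/(\gamma(t)-\gamma(0))-e^{it}/\lf(i(1-e^{it})\rg)$ is $2\pi$‑periodic and $L$ has matching one‑sided limits at $t=0$ and $t=2\pi$ (both equal to $\log\gamma'(0)$), it follows that $L$ is a $2\pi$‑periodic function, analytic on $[0,2\pi]$. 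Confirming that the prescribed branch really makes $L$ analytic and periodic — rather than introducing a spurious $2\pi i$ jump — is the step I expect to require the most care.

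Finally I would split $I=I_1+\tfrac12 I_2+\tfrac i2 I_3$ with $I_1:=\int_0^{2\pi}L\,H$, $I_2:=\int_0^{2\pi}\log(4\sin^2\tfrac t2)\,H$ and $I_3:=\int_0^{2\pi}t\,H$, and apply Lemma~\ref{lem:trap_rule}: estimate~\eqref{eq:trap_smooth} to the analytic integrands $L\,H$ and $t\,H$, and estimate~\eqref{eq:trap_log} to $H$; each contributes an $O(M^{-N-2})$ quadrature error, the endpoint hypotheses having been verified in the second step. Because $L(t_m)+\tfrac{it_m}{2}+\tfrac12\log(4\sin^2\tfrac{t_m}{2})=\log(\gamma(t_m)-\gamma(0))$ for $2\le m\le M$, and the $m=1$ contributions to the $L\,H$ and $t\,H$ trapezoidal sums vanish because $H(0)=0$, the three sums reassemble exactly into $\mathcal Q_M[I]$. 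The triangle inequality then yields $|I-\mathcal Q_M[I]|\le O(M^{-N-2})$, and taking $-\tfrac1{2\pi}\real\{\cdot\}$ of $I$ and $\mathcal Q_M[I]$, as in the first step, completes the proof.
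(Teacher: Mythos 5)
Your argument is correct and follows essentially the same route as the paper: write the regularized operator~\eqref{eq:SL_op_reg} in parametric form, observe that $g_0(t)=\{\psi(\gamma(t))-Q_N(\gamma(t),\gamma(0))\}\gamma'(t)$ (your $H$) vanishes together with its first $N$ derivatives at $t=0$ and $t=2\pi$, split off the periodic singular factor $\tfrac12\log\bigl(4\sin^2\tfrac t2\bigr)$, and apply Lemma~\ref{lem:trap_rule} to each piece. The one real difference is that you additionally peel off the term $\tfrac{it}{2}$ so as to make the smooth remainder $L$ a $2\pi$-periodic analytic function, and you flag the verification of that periodicity (no spurious $2\pi i$ jump for the branch of Theorem~\ref{th:single_layer}) as the delicate step. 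That extra split, and the worry attached to it, is unnecessary: estimate~\eqref{eq:trap_smooth} in Lemma~\ref{lem:trap_rule} requires only analyticity on the closed interval $[0,2\pi]$ together with endpoint vanishing of the integrand up to order $N$, not periodicity, so the paper applies it directly to $g_0g_1$ with $g_1(t)=\log(\gamma(t)-\gamma(0))-\tfrac12\log\bigl(4\sin^2\tfrac t2\bigr)$, which is analytic but non-periodic; the Leibniz rule transfers the endpoint vanishing from $g_0$ to the product exactly as in your second step, and estimate~\eqref{eq:trap_log} handles the log-sine part as you do. One small imprecision to fix: multiplication by $t$ does not lower the order of endpoint vanishing here, since $(tH)^{(n)}=tH^{(n)}+nH^{(n-1)}$ vanishes at both $0$ and $2\pi$ for all $n\le N$; if it genuinely lowered the order by one, as your parenthetical suggests, that piece would only contribute $O(M^{-N-1})$ and the claimed rate would be lost, so the explicit Leibniz computation should replace the parenthetical.
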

\begin{proof} Let
\begin{equation}\label{eq:thm_ex0}
g_0(t) = \lf\{\psi(\gamma(t))-Q_N(\gamma(t),\gamma(0))\rg\}\gamma'(t),
\end{equation}
which is $2\pi$-periodic and analytic on $[0,2\pi]$, and note that  $g^{(n)}_0(0)=g^{(n)}_0(2\pi)=0$ for $n=0,\ldots,N$, because $Q_N(\cdot, z_0)$ is the density interpolant of $\psi$ at $z_0=\gamma(0)$.
On the other hand, we have the identity
\begin{equation}\label{eq:thm_ex}
\log(\gamma(t)-\gamma(0)) =\frac{1}{2}\log\lf(4\sin^2 \frac{t}{2}\rg)+g_1(t),
\end{equation}
where 
$$
 g_1(t)=\frac{1}{2}\log\lf(\frac{|\gamma(t)-\gamma(0)|^2}{4\sin^2 \frac{t}{2}} \rg)+i\operatorname{arg}(\gamma(t)-\gamma(0)),
$$
 is an analytic (but not periodic) function on $[0,2\pi]$.
Therefore, it follows from~\eqref{eq:thm_ex0} and~\eqref{eq:thm_ex} that the  single-layer operator can be expressed as
$$
(S\varphi)(\nex) = -\frac{1}{2\pi}\real\left\{\frac{1}{2}\int_0^{2\pi}g_0(t)\log\lf(4\sin^2 \frac{t}{2}\rg)\de t+\int_0^{2\pi}g_0(t)g_1(t)\de t\right\}.
$$
Since $g=g_0g_1$ satisfies
$$
g^{(n)}(0) =\sum_{k=0}^{n}{n \choose k}  g_1^{(n-k)}(0)  g_0^{(k)}(0)=\sum_{k=0}^{n}{n \choose k}  g_1^{(n-k)}(2\pi) g_0^{(k)}(2\pi) = g^{(n)}(2\pi) =0
$$
for $n=0,\ldots,N$, we obtain, from Lemma~\ref{lem:trap_rule}, the asymptotic error bound.
\end{proof}

Continuing with the numerical examples, Figure~\ref{fig_Top} displays the relative errors $\max_{1\leq j\leq M}|u_j- u(\nex_j)|/\max_{1\leq j\leq M}|u(\nex_j)|$ in the numerical solution of the integral equation~\eqref{eq:IE_HS}. Exponential convergence as $M\to \infty$ is achieved in this example for the interpolation orders $N=1,2$ and~3. This phenomenon is explained by the fact that, as the adjoint double-layer operator $K^\top$, the regularized hypersingular operator~\eqref{eq:hyper_complex} is given in terms of  $2\pi$-periodic analytic integrands for $N=1,2$ and~3, and, as such, the trapezoidal rule yields exponential convergence in the overall evaluation of $(-I/2+K^\top+T)\varphi$ at the quadrature nodes as $M$ increases. This eventually translates into the same exponential convergence of the integral equation solution.

The inset figures in~Figures~\ref{fig_Sop} and~\ref{fig_Top} show the logarithm in base ten of the absolute error in the numerical solution of~\eqref{eq:interior_Laplace} obtained from the Green's formula~\eqref{eq:GF} with (right half) and without (left half) regularization of the single- and double-layer potentials at points near the boundary. A total of $M=400$ discretization points and the interpolation order $N=3$ were used to produced these figures. The color difference between them (note that they are displayed using the same color scale) indicates superior accuracy achieved using the integral equation~\eqref{eq:IE_HS}.

 \begin{figure}[h!]
\centering	
\subfloat[Interior conformal mappings: Moai (top figure) and Nazca bird (bottom figure).]{\includegraphics[width=0.71\textwidth]{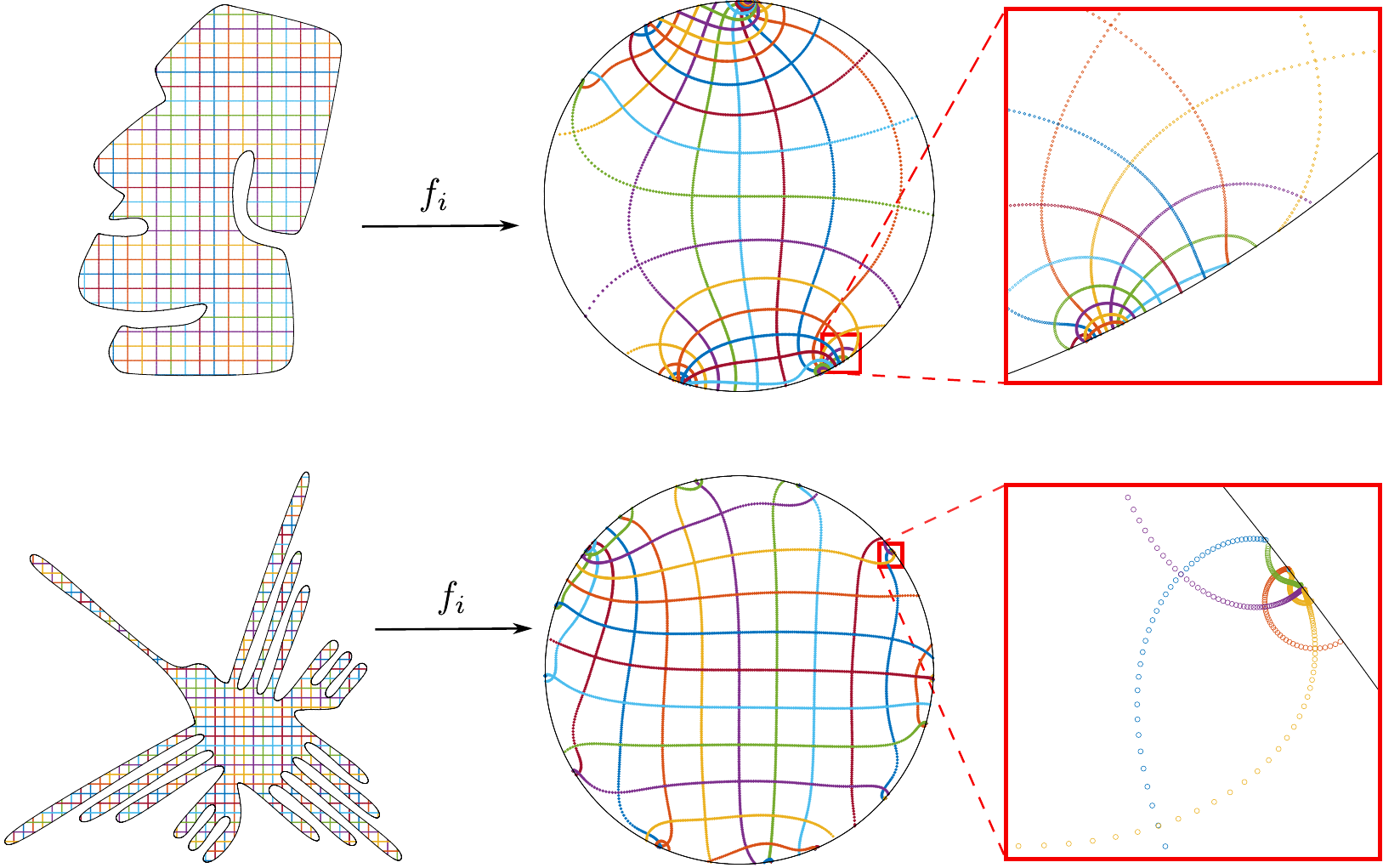}\label{fig:Moai}}\\
\subfloat[Exterior conformal mappings: Moai (top figure) and Nazca bird (bottom figure).]{\includegraphics[width=0.71\textwidth]{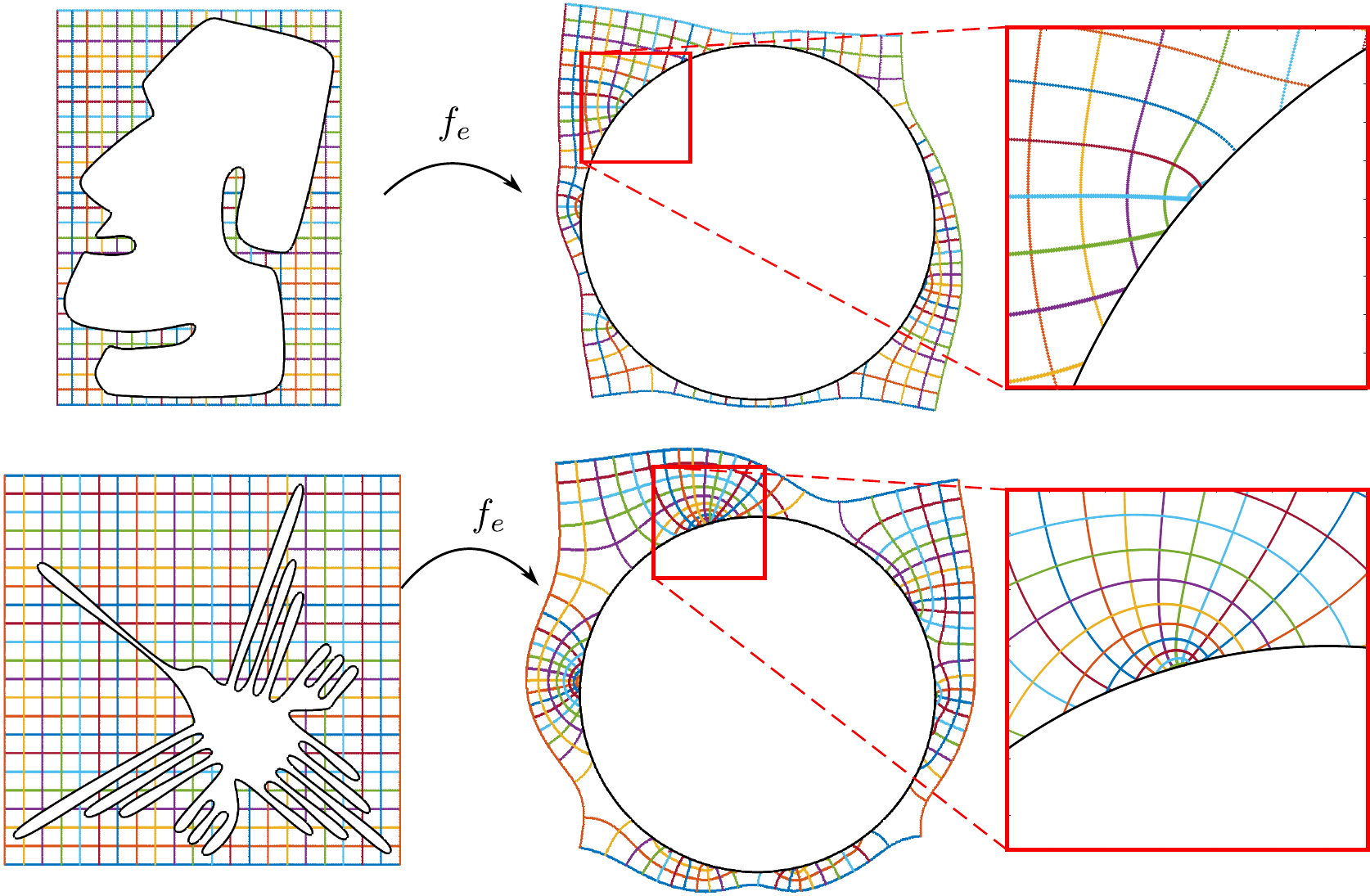}\label{fig:Nazca}}
 \caption{Examples of conformal mappings from $C^2$ domains to the interior~(a) and exterior~(b) of the unit disk. The interior and exterior mappings, given by $f_i$ in~\eqref{eq:conf_map} and $f_e$ in~\eqref{eq:ext_conf_map}, respectively, are regularized by the proposed density interpolation technique (at points $z$ lying near the contour) and numerically evaluated by means of the discretization approach presented in Section~\ref{sec:psmooth}. The zoomed-in figures 
demonstrate that the accuracy achieved by the density interpolation method in the evaluation of the mappings $f_i$ and $f_e$ allows them to retain their angle preserving property near the contour even at regions of large distortion.}\label{fig:cmap}
\end{figure}

In our final example we apply the density interpolation technique to the numerical evaluation of conformal mappings from the interior (resp. exterior) of $C^2$ curves into the interior (resp. exterior) of the unit disk. We consider here the \emph{Moai} and \emph{Nazca bird} curves depicted in Figures~\ref{fig:cmap} whose~$C^2$ parametrizations produced by cubic spline interpolation. The associated interior (resp. exterior) map integral equation~\eqref{eq:int_IE} (resp.~\eqref{eq:ext_IE}) is solved by applying a direct Nystr\"om method based on the Chebyshev discretization approach presented in Section~\ref{sec:psmooth}. The density interpolation method of order $N=3$ is then employed to evaluate the interior ($f_i$) and exterior ($f_e$) conformal mappings near the boundary; the zoomed figures provide evidence of the achieved accuracy in the numerical evaluation of  $f_i$ in~\eqref{eq:conf_map} and $f_e$ in~\eqref{eq:ext_conf_map} which are expressed terms of the Cauchy integral operator.

It is important to point out that in the cases of the Araucaria of Figure~\ref{fig:AraucariaExample}, and the snow	flake of Figure~\ref{fig:snowflake} and Table~\ref{tab:1}, which are the only contours in the paper that feature corners, the integrands considered are in fact smooth up to and including the endpoints of each curve segment. As such, the advocated Chebyshev discretization yields spectral accuracy in the evaluation of the resulting integral operator as the number of quadrature nodes increases. This is, however, often not the case for boundary integral equation solutions, which typically develop singularities depending on the angles at corners~\cite{kozlov1997elliptic}. As is well known, these singularities do have a significant impact on the overall numerical accuracy, that cannot be completely avoided by the quadratic refinement effected by the Chebyshev grids~\cite{Kress:1990vm,bruno2009high,anand2012well}. A treatment of such singularities in the context of density interpolation methods, which is compatible with the method proposed here, is given in~\cite[Sec.~5.3]{perez2018plane}. We mention, finally, that since both the Moai and the Nazca bird contours in Figure~\ref{fig:cmap}, are given in terms of parametrizations that are (globally) of class $C^2$, they do not feature corners. Therefore, although the associated conformal mappings were produced from Laplace integral equation solutions, the global regularity of the curves does not limits the accuracy of the Chebyshev grid discretization significantly.

\section{Conclusions}
We have introduced a new high-order density interpolation technique for the kernel regularization of Cauchy-like integral operators, such as  Laplace,  biharmonic, and Stokes layer potentials and associated  boundary integral operators in two dimensions, as well as contour integral representations of conformal mappings. The proposed methodology relies on the well-known Cauchy integral and Sokhotski-Plemelj formulae of complex analysis, together with local Taylor interpolations of input density functions along the contour. High-order numerical methods for the practical implementation of the proposed technique were presented for both smooth and piecewise smooth contours. An FFT-based algorithm applicable to both cases was introduced for the accurate and efficient  construction of the density interpolant. Application of this methodology to Stokes flows in the presence of moving interfaces is currently under investigation.

\bibliographystyle{abbrv}
\bibliography{References}

\begin{thebibliography}{10}

\bibitem{aldrovandi2001special}
R.~Aldrovandi.
\newblock {\em {Special Matrices of Mathematical Physics: Stochastic,
  Circulant, and Bell Matrices}}.
\newblock World Scientific, 2001.

\bibitem{anand2012well}
A.~Anand, J.~Ovall, and C.~Turc.
\newblock Well-conditioned boundary integral equations for two-dimensional
  sound-hard scattering problems in domains with corners.
\newblock {\em Journal of Integral Equations and Applications}, 24(3):321--358,
  2012.

\bibitem{atkinson1997numerical}
K.~E. Atkinson.
\newblock {\em The Numerical Solution of Integral Equations of the Second
  Kind}, volume~4.
\newblock Cambridge university press, 1997.

\bibitem{Barnett:2014tq}
A.~Barnett.
\newblock {Evaluation of layer potentials close to the boundary for Laplace and
  Helmholtz problems on analytic planar domains}.
\newblock {\em SIAM Journal on Scientific Computing}, 36(2):A427--A451, 2014.

\bibitem{Barnett:2015kg}
A.~Barnett, B.~Wu, and S.~Veerapaneni.
\newblock Spectrally accurate quadratures for evaluation of layer potentials
  close to the boundary for the {2D} {Stokes} and {Laplace} equations.
\newblock {\em SIAM Journal on Scientific Computing}, 37(4):B519--B542, Jan.
  2015.

\bibitem{bruno2012numerical}
O.~Bruno and D.~Hoch.
\newblock Numerical differentiation of approximated functions with limited
  order-of-accuracy deterioration.
\newblock {\em SIAM Journal on Numerical Analysis}, 50(3):1581--1603, 2012.

\bibitem{bruno2009high}
O.~P. Bruno, J.~S. Ovall, and C.~Turc.
\newblock A high-order integral algorithm for highly singular pde solutions in
  lipschitz domains.
\newblock {\em Computing}, 84(3-4):149--181, 2009.

\bibitem{celorrio1999euler}
R.~Celorrio and F.-J. Sayas.
\newblock The {E}uler-{M}aclaurin formula in presence of a logarithmic
  singularity.
\newblock {\em BIT Numerical Mathematics}, 39(4):780--785, 1999.

\bibitem{chawla1974modified}
M.~Chawla and T.~Ramakrishnan.
\newblock {Modified Gauss-Jacobi quadrature formulas for the numerical
  evaluation of Cauchy type singular integrals}.
\newblock {\em BIT Numerical Mathematics}, 14(1):14--21, 1974.

\bibitem{chawla1974numerical}
M.~Chawla and T.~Ramakrishnan.
\newblock {Numerical evaluation of integrals of periodic functions with Cauchy
  and Poisson type kernels}.
\newblock {\em Numerische Mathematik}, 22(4):317--323, 1974.

\bibitem{davis2007methods}
P.~J. Davis and P.~Rabinowitz.
\newblock {\em {Methods of Numerical Integration}}.
\newblock Courier Corporation, 2007.

\bibitem{elliott1979gauss}
D.~Elliott and D.~Paget.
\newblock Gauss type quadrature rules for cauchy principal value integrals.
\newblock {\em Mathematics of Computation}, 33(145):301--309, 1979.

\bibitem{falconer2004fractal}
K.~Falconer.
\newblock {\em Fractal Geometry: Mathematical Foundations and Applications}.
\newblock John Wiley \& Sons, 2004.

\bibitem{faria2020general}
L.~M. Faria, C.~P{\'e}rez-Arancibia, and M.~Bonnet.
\newblock General-purpose kernel regularization of boundary integral equations
  via density interpolation.
\newblock {\em Computer Methods in Applied Mechanics and Engineering},
  378(113703):1--29, 2021.

\bibitem{fornaro1973numerical}
R.~J. Fornaro.
\newblock Numerical evaluation of integrals around simple closed curves.
\newblock {\em SIAM Journal on Numerical Analysis}, 10(4):623--634, 1973.

\bibitem{greenbaum1992numerical}
A.~Greenbaum, L.~Greengard, and A.~Mayo.
\newblock On the numerical solution of the biharmonic equation in the plane.
\newblock {\em Physica D: Nonlinear Phenomena}, 60(1-4):216--225, 1992.

\bibitem{Greengard:2004dg}
L.~Greengard and M.~C. Kropinski.
\newblock {Integral equation methods for Stokes flow in doubly-periodic
  domains}.
\newblock {\em Journal of Engineering Mathematics}, 48(2):157--170, Feb. 2004.

\bibitem{greengard1996integral}
L.~Greengard, M.~C. Kropinski, and A.~Mayo.
\newblock {Integral equation methods for Stokes flow and isotropic elasticity
  in the plane}.
\newblock {\em Journal of Computational Physics}, 125(2):403--414, 1996.

\bibitem{greengard1987fast}
L.~Greengard and V.~Rokhlin.
\newblock A fast algorithm for particle simulations.
\newblock {\em Journal of computational physics}, 73(2):325--348, 1987.

\bibitem{hackbusch1995}
W.~Hackbusch.
\newblock {\em Integral Equations: Theory and Numerical Treatment}.
\newblock Birkh\"auser, 1995.

\bibitem{hackbusch2015hierarchical}
W.~Hackbusch.
\newblock {\em Hierarchical Matrices: Algorithms and Analysis}, volume~49.
\newblock Springer, 2015.

\bibitem{helsing2001complex}
J.~Helsing and A.~Jonsson.
\newblock Complex variable boundary integral equations for perforated infinite
  planes.
\newblock {\em Engineering Analysis with Boundary Elements}, 25(3):191--202,
  2001.

\bibitem{helsing2008evaluation}
J.~Helsing and R.~Ojala.
\newblock On the evaluation of layer potentials close to their sources.
\newblock {\em Journal of Computational Physics}, 227(5):2899--2921, 2008.

\bibitem{henriciVol1}
P.~Henrici.
\newblock {\em Applied and Computational Complex Analysis, Volume 1: Power
  Series, Integration, Conformal Mapping, Location of Zeros}.
\newblock John Wiley \& Sons, 1974.

\bibitem{henriciVol3}
P.~Henrici.
\newblock {\em Applied and Computational Complex Analysis, Volume 3: Discrete
  Fourier Analysis, Cauchy Integrals, Construction of Conformal Maps, Univalent
  Functions}.
\newblock John Wiley \& Sons, 1986.

\bibitem{hunter1972some}
D.~Hunter.
\newblock {Some Gauss-type formulae for the evaluation of Cauchy principal
  values of integrals}.
\newblock {\em Numerische Mathematik}, 19(5):419--424, 1972.

\bibitem{ioakimidis1991numerical}
N.~Ioakimidis, K.~Papadakis, and E.~Perdios.
\newblock Numerical evaluation of analytic functions by {Cauchy's} theorem.
\newblock {\em BIT Numerical Mathematics}, 31(2):276--285, 1991.

\bibitem{jaswon1977integral}
M.~A. Jaswon and G.~T. Symm.
\newblock {\em Integral Equation Methods in Potential Theory and
  Elastostatics}.
\newblock Academic Press, 1977.

\bibitem{javed2014trapezoidal}
M.~Javed and L.~N. Trefethen.
\newblock A trapezoidal rule error bound unifying the {E}uler--{M}aclaurin
  formula and geometric convergence for periodic functions.
\newblock {\em Proceedings of the Royal Society A: Mathematical, Physical and
  Engineering Sciences}, 470(2161):20130571, 2014.

\bibitem{johnson2011notes}
S.~G. Johnson.
\newblock {Notes on FFT-based differentiation}.
\newblock {\em MIT Applied Mathematics}, (April), 2011.

\bibitem{klockner2013quadrature}
A.~Kl{\"o}ckner, A.~Barnett, L.~Greengard, and M.~O'Neil.
\newblock Quadrature by expansion: A new method for the evaluation of layer
  potentials.
\newblock {\em Journal of Computational Physics}, 252:332--349, 2013.

\bibitem{Kolm:2001bt}
P.~Kolm and V.~Rokhlin.
\newblock {Numerical quadratures for singular and hypersingular integrals}.
\newblock {\em Computers and Mathematics with Applications}, 41(3-4):327--352,
  Feb. 2001.

\bibitem{kozlov1997elliptic}
V.~Kozlov, V.~Mazya, and J.~Rossman.
\newblock {\em Elliptic Boundary Value Problems in Domains with Point
  Singularities}, volume~52.
\newblock American Mathematical Soc., 1997.

\bibitem{Kress:1990vm}
R.~Kress.
\newblock {A Nystr{\"o}m method for boundary integral equations in domains with
  corners}.
\newblock {\em Numerische Mathematik}, 58(1):145--161, 1990.

\bibitem{Kress:1995}
R.~Kress.
\newblock {On the numerical solution of a hypersingular integral equation in
  scattering theory}.
\newblock {\em Journal of Computational and Applied Mathematics},
  61(3):345--360, 1995.

\bibitem{kress2012linear}
R.~Kress.
\newblock {\em Linear Integral Equations}, volume~82.
\newblock Springer, 3rd edition, 2014.

\bibitem{kress2014collocation}
R.~Kress et~al.
\newblock A collocation method for a hypersingular boundary integral equation
  via trigonometric differentiation.
\newblock {\em Journal of Integral Equations and Applications}, 26(2):197--213,
  2014.

\bibitem{kropinski1999integral}
M.~Kropinski.
\newblock Integral equation methods for particle simulations in creeping flows.
\newblock {\em Computers and Mathematics with Applications}, 38(5-6):67--87,
  1999.

\bibitem{Kropinski:2002fd}
M.~Kropinski.
\newblock Numerical methods for multiple inviscid interfaces in creeping flows.
\newblock {\em Journal of Computational Physics}, 180(1):1--24, July 2002.

\bibitem{kythe2019handbook}
P.~K. Kythe.
\newblock {\em Handbook of Conformal Mappings and Applications}.
\newblock CRC Press, 2019.

\bibitem{lyness1967numerical}
J.~Lyness and L.~Delves.
\newblock On numerical contour integration round a closed contour.
\newblock {\em Mathematics of Computation}, 21(100):561--577, 1967.

\bibitem{mayo1984fast}
A.~Mayo.
\newblock {The fast solution of Poisson's and the biharmonic equations on
  irregular regions}.
\newblock {\em SIAM Journal on Numerical Analysis}, 21(2):285--299, 1984.

\bibitem{mikhlin1964integral}
S.~G. Mikhlin.
\newblock {\em Integral Equations and Their Applications to Certain Problems in
  Mechanics, Mathematical Physics and Technology}.
\newblock Pergamon Press, 1964.

\bibitem{monegato1982numerical}
G.~Monegato.
\newblock {The numerical evaluation of one-dimensional Cauchy principal value
  integrals}.
\newblock {\em Computing}, 29(4):337--354, 1982.

\bibitem{monegato1994numerical}
G.~Monegato.
\newblock Numerical evaluation of hypersingular integrals.
\newblock {\em Journal of Computational and Applied Mathematics},
  50(1-3):9--31, 1994.

\bibitem{muskhelishvili2008}
N.~I. Muskhelishvili.
\newblock {\em Singular Integral Equations: Boundary Problems and Function
  Theory and Their Application to Mathematical Physics}.
\newblock Dover Publications Inc., 2008.

\bibitem{paget1972algorithm}
D.~Paget and D.~Elliott.
\newblock {An algorithm for the numerical evaluation of certain Cauchy
  principal value integrals}.
\newblock {\em Numerische Mathematik}, 19(5):373--385, 1972.

\bibitem{papamichael2010numerical}
N.~Papamichael and N.~Stylianopoulos.
\newblock {\em Numerical Conformal Mapping: Domain Decomposition and the
  Mapping of Quadrilaterals}.
\newblock World Scientific, 2010.

\bibitem{perez2018plane}
C.~P{\'e}rez-Arancibia.
\newblock A plane-wave singularity subtraction technique for the classical
  {D}irichlet and {N}eumann combined field integral equations.
\newblock {\em Applied Numerical Mathematics}, 123:221--240, 2018.

\bibitem{perez2019harmonic}
C.~P{\'e}rez-Arancibia, L.~M. Faria, and C.~Turc.
\newblock Harmonic density interpolation methods for high-order evaluation of
  laplace layer potentials in {2D} and {3D}.
\newblock {\em Journal of Computational Physics}, 376:411--434, 2019.

\bibitem{perez2019planewave}
C.~P{\'e}rez-Arancibia, C.~Turc, and L.~Faria.
\newblock {Planewave density interpolation methods for 3D Helmholtz boundary
  integral equations}.
\newblock {\em SIAM Journal on Scientific Computing}, 41(4):A2088--A2116, 2019.

\bibitem{perez2020IEEE}
C.~P\'erez-Arancibia, C.~Turc, L.~M. Faria, and C.~Sideris.
\newblock Planewave density interpolation methods for the {EFIE} on simple and
  composite surfaces.
\newblock {\em IEEE Transactions on Antennas and Propagation}, 2020.

\bibitem{saad1986gmres}
Y.~Saad and M.~H. Schultz.
\newblock {GMRES: A generalized minimal residual algorithm for solving
  nonsymmetric linear systems}.
\newblock {\em SIAM Journal on Scientific and Statistical Computing},
  7(3):856--869, 1986.

\bibitem{symm1966integral}
G.~T. Symm.
\newblock An integral equation method in conformal mapping.
\newblock {\em Numerische Mathematik}, 9(3):250--258, 1966.

\bibitem{symm1967numerical}
G.~T. Symm.
\newblock Numerical mapping of exterior domains.
\newblock {\em Numerische Mathematik}, 10(5):437--445, 1967.

\bibitem{theocaris1979method}
P.~S. Theocaris and N.~I. Ioakimidis.
\newblock A method of numerical solution of cauchy-type singular integral
  equations with generalized kernels and arbitrary complex singularities.
\newblock {\em Journal of Computational Physics}, 30(3):309--323, 1979.

\bibitem{trefethen2014exponentially}
L.~N. Trefethen and J.~Weideman.
\newblock The exponentially convergent trapezoidal rule.
\newblock {\em SIAM Review}, 56(3):385--458, 2014.

\bibitem{vetterling1992numerical}
W.~T. Vetterling and W.~H. Press.
\newblock {\em Numerical Recipes in Fortran: The Art of Scientific Computing}.
\newblock Cambridge University Press, 1992.

\bibitem{wala2018conformal}
M.~Wala and A.~Kl{\"o}ckner.
\newblock Conformal mapping via a density correspondence for the double-layer
  potential.
\newblock {\em SIAM Journal on Scientific Computing}, 40(6):A3715--A3732, 2018.

\bibitem{waldvogel2006fast}
J.~Waldvogel.
\newblock {Fast construction of the Fej{\'e}r and Clenshaw--Curtis quadrature
  rules}.
\newblock {\em BIT Numerical Mathematics}, 46(1):195--202, 2006.

\end{thebibliography}

\end{document}